\newtheorem{theorem}{Theorem}[section]
\newtheorem{prop}[theorem]{Proposition}
\newtheorem{cor}[theorem]{Corollary}
\newtheorem{theo}[theorem]{Theorem}
\newtheorem{lem}[theorem]{Lemma}
\newtheorem{thmintro}{Theorem}
\newtheorem*{cor*}{Corollary}
\newtheorem*{theo*}{Theorem}
\newtheorem*{lem*}{Lemma}
\newtheorem*{prop*}{Proposition}
\theoremstyle{definition}
\newtheorem{definition}[theorem]{Definition}
\newtheorem{ex}[theorem]{Example}
\newtheorem*{defn*}{Definition}
\theoremstyle{remark}
\newcommand{\cG}{\mathcal{G}}
\newcommand{\cJ}{\mathcal{J}}
\newcommand{\cS}{\mathcal{S}}
\newcommand{\cX}{\mathcal{X}}
\newcommand{\acts}{\curvearrowright}
\newcommand{\Hi}{\mathcal{H}}
\newcommand{\g}{\gamma}
\newcommand{\C}{\mathbb C}
\title[]{The action of an inverse semigroup on its Stone-\v Cech compactification}
\author[J. P. Z. Gondek]{Joseph P. Z. Gondek}
	\address[J. P. Z. Gondek]{University of Houston, Department of Mathematics. Houston, TX,
77204-3008, United States
}
	\email{jpgondek@cougarnet.uh.edu}
\author[C. Starling]{Charles Starling}
	\address[C. Starling]{Carleton University, School of Mathematics and Statistics. 4302 Herzberg Laboratories}
	\email{cstar@math.carleton.ca}
\date{}
\begin{document}

\begin{abstract}
We initiate the study of the Stone-\v Cech transformation groupoid $\mathcal{G} = \mathcal{S}\ltimes\beta\mathcal{S}$ of an inverse semigroup $\mathcal{S}$. We prove that the properties of being Hausdorff, principal, and effective are all equivalent for $\mathcal{G}$, and give an algebraic condition on $\mathcal{S}$ equivalent to the Hausdorffness of $\mathcal{G}$. We show that the Hausdorffness of Exel's tight groupoid $\mathcal{G}_{\text{tight}}(\mathcal{S})$ is necessary for the Hausdorffness of $\mathcal{G}$. Finally, we clarify the connection between several crossed product constructions involving this groupoid, and show that when $\mathcal{G}$ is Hausdorff and $\mathcal{S}$ has the Property (FL) of Lled\'o and Mart\'inez, then $\mathcal{G}$ is amenable if and only if the reduced C$^*$-algebra $\textup{C}^*_r(\mathcal{S})$ is exact.
\end{abstract}
\maketitle

\section{Introduction} 

\let\thefootnote\relax\footnotetext{2020 \textit{Mathematics Subject Classification} Primary: 20M18, 18B40, 46L05; Secondary: 43A65.\\[1ex] \textit{Keywords}: \'etale groupoids, inverse semigroups, Stone-\v Cech compactification.\\[1ex] Part of this work was completed as part of the Master's thesis of the first author. The second author is funded by NSERC grant RGPIN-2021-03834.}

A semigroup $\mathcal{S}$ is called \textit{inverse} if, for every $s\in\mathcal{S}$, there is a \textbf{unique} $s^*\in\mathcal{S}$ with $ss^*s = s$ and $s^*ss^* = s^*$. There have been recent efforts to understand the dynamics and representation theory of inverse semigroups in a way that generalizes the discrete group setting. The first to investigate the exactness of $\textup{C}^*_r(\mathcal{S})$ seems to be Anantharaman-Delaroche in the monograph \cite{adeg}, where groupoid C$^*$-algebra techniques are applied to show that $\textup{C}^*_r(\mathcal{S})\subset\mathcal{B}(\ell^2(\mathcal{S}))$ is exact whenever the maximum group homomorphic image $\mathcal{S}{/}\sigma$ is an exact group. In \cite{lledomartinez}, Lled\'o and Mart\'inez construct and study the uniform Roe algebra $\mathcal{R}_{\mathcal{S}}\subset \mathcal{B}(\ell^2(\mathcal{S}))$ of a countable inverse semigroup $\mathcal{S}$. Under a mild combinatorial assumption (admitting a finite labelling), they show that the reduced inverse semigroup C$^*$-algebra $\textup{C}^*_r(\mathcal{S})$ is exact if and only if $\mathcal{R}_\mathcal{S}$ is nuclear.

In this note, we consider the contravariant action $\mathcal{S}\acts \beta\mathcal{S}$, and investigate the properties of the resulting action groupoid $\mathcal{S}\ltimes\mathcal{\beta}\mathcal{S}$ and reduced C$^*$-algebra $\textup{C}^*_r(\mathcal{S}\ltimes\mathcal{\beta}\mathcal{S}) \cong \ell^{\infty}(\mathcal{S})\rtimes_r\mathcal{S}$. We first consider the question of effectiveness and principality for $\mathcal{S}\ltimes \beta\mathcal{S}$. These conditions always hold in the group case, and correspond to the fact that the action $\Gamma\acts \beta\Gamma$ is always free (see for example \cite{NR14}*{Example~2.14}). In contrast to the group case, and somewhat surprisingly, there are inverse semigroups $\mathcal{S}$ such that $\mathcal{S}\ltimes \beta\mathcal{S}$ is not effective nor principal. Using a finite cover criterion of Exel and Pardo \cite{EP16}, we obtain the following generalization of the group case:

\begin{thmintro}\label{mainthm:B}
    Let $\cS$ be an inverse semigroup. Then the following statements are equivalent:
    \begin{enumerate}
        \item $\cS\ltimes\beta\cS$ is Hausdorff;
        \item $\cS\ltimes\beta\cS$ is effective;
        \item $\cS\ltimes\beta\cS$ is principal;
        \item Every clopen action of $\cS$ on a locally compact Hausdorff space has a Hausdorff transformation groupoid.
    \end{enumerate}
\end{thmintro}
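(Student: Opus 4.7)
The plan is to prove the three equivalences by establishing $(3) \Rightarrow (1)$, $(2) \Rightarrow (1)$, and $(1) \Rightarrow (2) \wedge (3)$, drawing on the algebraic characterization of Hausdorffness of $\cG = \cS \ltimes \beta\cS$ established earlier in the paper (a finite cover condition on $\cS$ of Exel--Pardo type).

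First I would dispatch $(3) \Rightarrow (1)$: the action $\cS \acts \beta\cS$ is itself a clopen action on the compact Hausdorff space $\beta\cS$, because for each idempotent $e \in E(\cS)$ the domain is a clopen subset of $\beta\cS$. So $(1)$ is a special case of $(3)$.

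Next, I would prove $(2) \Rightarrow (1)$ as a general fact about étale groupoids with Hausdorff unit space. The isotropy bundle $\text{Iso}(\cG)$ is the preimage under the continuous map $(s, r) : \cG \to \cG^{(0)} \times \cG^{(0)}$ of the diagonal, which is closed because $\cG^{(0)} = \beta\cS$ is Hausdorff; hence $\text{Iso}(\cG)$ is closed in $\cG$. If $\cG$ is principal then $\text{Iso}(\cG) = \cG^{(0)}$, so the unit space is closed. For an étale groupoid with Hausdorff unit space, a closed (and always open) unit space gives Hausdorffness: given a net $g_\alpha$ converging to both $g_1$ and $g_2$, one shows via local bisections that the products $g_\alpha^{-1}g_\alpha$ are units converging to $g_1^{-1}g_2$, which therefore lies in $\overline{\cG^{(0)}} = \cG^{(0)}$, forcing $g_1 = g_2$.

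The heart of the proof then lies in $(1) \Rightarrow (2)$ and $(1) \Rightarrow (3)$, both handled via the algebraic characterization of Hausdorffness. For $(1) \Rightarrow (3)$, one applies Exel--Pardo's finite cover criterion to any clopen action of $\cS$ on a locally compact Hausdorff space; the algebraic condition on $\cS$---which does not depend on the space being acted upon---supplies exactly the hypothesis of the criterion. For $(1) \Rightarrow (2)$, given an isotropy germ $[s, \xi]$ with $s \cdot \xi = \xi$, the goal is to produce an idempotent $e \in \xi$ with $se = e$, so that $[s, \xi] = [e, \xi]$ is a unit. The dynamical condition $s \cdot \xi = \xi$ translates into a statement about a certain subset $A_s \subseteq \cS$ (the set of $t$ on which $s$ acts as the identity) lying in the ultrafilter $\xi$. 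The finite cover hypothesis then provides finitely many idempotents $e_1, \ldots, e_n \leq s$ whose principal filters $\uparrow\!e_i$ cover $A_s$; since $\xi$ is an ultrafilter, some $\uparrow\!e_i$ lies in $\xi$, yielding the desired $e_i \in \xi$.

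The main obstacle is the final step, $(1) \Rightarrow (2)$: correctly identifying the subset of $\cS$ whose membership in $\xi$ encodes the isotropy condition $s \cdot \xi = \xi$, and verifying that the finite cover produced by the algebraic condition genuinely covers this subset. The rest of the argument is either standard étale-groupoid theory or a direct appeal to the algebraic characterization already in hand.
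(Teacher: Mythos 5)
Your overall architecture matches the paper's: $(3)\Rightarrow(1)$ because the action on $\beta\cS$ is itself clopen; $(2)\Rightarrow(1)$ by general \'etale groupoid theory (a principal groupoid has closed unit space, since $\textup{Iso}(\cG)$ is the preimage of the diagonal); and $(1)\Rightarrow(2),(3)$ via the Exel--Pardo criterion together with the finite-cover condition on $\cJ_s=\{e\in E_\cS: se=e\}$. The steps $(3)\Rightarrow(1)$, $(2)\Rightarrow(1)$, $(1)\Rightarrow(3)$, and your endgame for $(1)\Rightarrow(2)$ (an ultrafilter containing a finite union must contain one of the pieces, yielding $f\cS\in\xi$ with $f\in\cJ_s$ and hence $[s,\xi]=[f,\xi]$) are all sound and agree with the paper.

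The gap is the assertion that the isotropy condition $\lambda_s\xi=\xi$ ``translates into'' the statement $A_s\in\xi$, where $A_s=\{t\in s^*\cS: st=t\}=\bigcup_{e\in\cJ_s}e\cS$. This implication is not formal, and it is the actual mathematical content of the theorem: a priori an ultrafilter can be fixed by the homeomorphism $\lambda_s$ of $\overline{s^*\cS}$ without containing the fixed-point set of the underlying bijection $t\mapsto st$ of the discrete set $s^*\cS$. Already in the group case, showing that $\Gamma\acts\beta\Gamma$ is free --- i.e.\ that a fixed-point-free bijection of a discrete set has fixed-point-free Stone--\v Cech extension --- requires the Kat\v etov three-set lemma or an equivalent combinatorial input. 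The paper supplies exactly this step via Frol\'ik's theorem (Lemma \ref{frolik}): if $\lambda_s\xi=\xi$, then $\lambda_s$ restricts to the identity on a basic clopen neighbourhood $\overline{A}$ of $\xi$, so the points of $A$ are fixed principal ultrafilters, whence $A\subseteq A_s$ and $A_s\in\xi$ (this is Lemma \ref{lem:Fs_form}). You correctly locate the ``main obstacle'' at this step but frame it as bookkeeping (``correctly identifying the subset''); in fact a genuine theorem about homeomorphisms of $\beta X$ is needed, and without it the implication $(1)\Rightarrow(2)$ is unproven. A minor further slip: the sets covering $A_s$ are $e_i\cS=\{t: e_it=t\}$, not the order filters $\uparrow\! e_i$ in $(\cS,\leqslant)$, though this does not affect the shape of the argument.
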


This generalizes the group case because $\mathcal{S}\ltimes\beta\cS $ is always Hausdorff when $\cS$ is a group. Using Theorem~\ref{mainthm:B}, we give an algebraic criterion on $\mathcal{S}$ (Theorem \ref{th:Hausdorff_equality}) which implies $\mathcal{S}\ltimes\beta\cS$ is Hausdorff for the class of abstract pseudogroups (Corollary~\ref{hausdorffexample}). Finally, since there are many examples of inverse semigroups with non-Hausdorff tight groupoids, condition (3) of Theorem~\ref{mainthm:B} provides many examples of inverse semigroups for which $\cS\ltimes\beta\cS$ is not Hausdorff nor principal.

To consider the amenability of the action $\mathcal{S}\acts\beta\mathcal{S}$ in the sense of Exel and Starling \cite{starling}, we clarify the relationship between the uniform Roe algebra $\mathcal{R}_{\mathcal{S}}$ and the reduced crossed product $\ell^{\infty}(\mathcal{S})\rtimes_r\mathcal{S}$ in the sense of \cite{bussexel1}. In \cite{lledomartinez}, the uniform Roe algebra $\mathcal{R}_{\mathcal{S}}\subset \mathcal{B}(\ell^2(\mathcal{S}))$ is shown to embed into $\mathcal{B}(\ell^2(\mathcal{S})\otimes\ell^2(\mathcal{S}))$ as the image of a covariant representation of the action $\mathcal{S}\acts\ell^{\infty}(\mathcal{S})$. It follows from Theorem~\ref{mainthm:B} that this C$^*$-algebra, denoted in \cite{lledomartinez} by ``$\ell^{\infty}(\mathcal{S})\rtimes_r\mathcal{S}$'' and in this note by $\ell^{\infty}(\mathcal{S})\rtimes_{rr}\mathcal{S}$, is not to be confused (in full generality) with the reduced crossed product $\ell^{\infty}(\mathcal{S})\rtimes_r\mathcal{S}$ in the sense of \cite{bussexel1}. However, these C$^*$-algebras often coincide:

\begin{thmintro}\label{mainthm:C}
The following conditions are equivalent for an inverse semigroup $\mathcal{S}$:
\begin{enumerate}
\item $\mathcal{S}\ltimes\beta\mathcal{S}$ is Hausdorff;
\item $\ell^{\infty}(\mathcal{S})\rtimes_{rr}\mathcal{S}\cong \ell^{\infty}(\mathcal{S})\rtimes_r\mathcal{S}$ canonically.
\end{enumerate}
\end{thmintro}

As a consequence, we obtain a generalization of a discrete group result.

\begin{thmintro}\label{mainthm:D}
Suppose $\mathcal{S}$ is a countable inverse semigroup with the property \textup{(FL)} of Lled\'o and Mart\'inez. If $\mathcal{S}\ltimes\beta\mathcal{S}$ is Hausdorff, then the action $\mathcal{S}\acts\beta\mathcal{S}$ is amenable \textup{(}in the sense of \cite{starling}\textup{)} if and only if $\textup{C}^*_r(\mathcal{S})$ is exact.
\end{thmintro}

We would like to thank the anonymous referee for several suggestions and corrections from the previous version to this manuscript, and particularly for pointing out how our results in that version suggest the content of Theorem \ref{mainthm:C}.

\section{Preliminaries}

In what follows, $\mathcal{S}$ denotes a countable inverse semigroup. We refer the reader to Lawson's book \cite{L} for a detailed treatment of these semigroups. A subset $A\subset\mathcal{S}$ is called \textit{symmetric} if $s^*\in A$ whenever $s\in A$. An element $e\in\mathcal{S}$ is called \textit{idempotent} if $e^2 = e$. For any $s\in\mathcal{S}$, $s^*s$ and $ss^*$ are idempotent. Denote by $E_{\mathcal{S}}$ the set of idempotent elements of $\mathcal{S}$. It is a pleasant fact that $E_{\mathcal{S}}$ is a commutative subsemigroup of $\mathcal{S}$, and that $\mathcal{S}$ is a group if and only if $E_{\mathcal{S}}$ is a singleton. If $X$ is a set, then the collection $\mathcal{I}(X)$ of all partial bijections between subsets of $X$ is an inverse semigroup under the operation of partial composition. The semigroup $\mathcal{I}(X)$ is called the \textit{symmetric inverse monoid} on $X$.

Inverse semigroups carry a natural partial order: $s\leqslant t$ if and only if $ts^*s =s$. Restricted to $E_\cS$, this order is given by $e\leqslant f$ if and only if $ef = e$. 

For any poset $(P,\prec)$ and subset $A\subseteq P$ we will use the notation
\[
A^\prec : = \{b\in P: a\prec b\text{ for some }a\in A\}.
\]
In this paper, we only consider the partial orders $\subseteq$ on the power set of $\cS$, and $\leqslant$ or $\geqslant$ on $\cS$.

An \textit{action} of $\mathcal{S}$ on a compact Hausdorff space $\mathcal{X}$ is a triple $(\mathcal{S}, \mathcal{X}, \alpha)$, where $\alpha: \mathcal{S}\to \mathcal{I}(\mathcal{X})$ is a semigroup homomorphism such that, \hbox{for all $s\in\mathcal{S}$,}
\begin{enumerate}
\item $\alpha_s$ is continuous, for all $s\in\mathcal{S}$;
\item $\text{dom}(\alpha_s)$ is open, for all $s\in\mathcal{S}$.
\end{enumerate}
One can see that the domain of $\alpha_s$ is the same as that of $\alpha_{s^*s}$. For each $e\in E_\cS$ we write $D_e^\alpha$ for the domain of $\alpha_e$ (or simply $D_e$ if the action is understood), so that $\alpha_s: D_{s^*s}^\alpha\to D_{ss^*}^\alpha$. The main action we are concerned with in this paper is described below.

\begin{ex}[\cite{gondek24}, Section 3.2]\label{sc} We define the canonical action of an inverse semigroup $\mathcal{S}$ on its Stone-\v Cech compactification $(\beta \mathcal{S}, \iota)$, where $\mathcal{S}$ is viewed as a discrete space and $\iota: \mathcal{S}\to \beta \mathcal{S}$ is the canonical embedding. For each $s\in\mathcal{S}$, define a bijection $\lambda_s: s^*\mathcal{S}\to s\mathcal{S}$ by $\lambda_st = st$ ($t\in s^*\mathcal{S}$). By the universal property of $\beta\cS$, $\lambda_s$ extends to a homeomorphism $\lambda_s: \overline{s^*\mathcal{S}}\to \overline{s\mathcal{S}}$, where $\overline{s^*\mathcal{S}} := \overline{\iota(s^*\mathcal{S})}^{\beta}\subseteq \beta \mathcal{S}$. We observe that the domain and codomain of $\lambda_s$ are always compact, open subsets of $\beta\mathcal{S}$. It is routine to check that the mapping $\mathcal{S}\ni s\mapsto \lambda_s\in\mathcal{I}(\beta\mathcal{S})$ is a semigroup homomorphism, and therefore an action, which we shall henceforth \hbox{refer to by $\mathcal{S}\acts\beta\mathcal{S}$.}
\end{ex}

The appropriate way to interpret the dynamics of an inverse semigroup action $\mathcal{S}\acts\mathcal{X}$ is through the corresponding \textit{transformation groupoid}, denoted by $\mathcal{S}\ltimes\mathcal{X}$. Before reminding the reader of this construction, we establish our conventions for \'etale groupoids. We direct the reader to the lecture notes \cite{simsNotes} of Sims for details. A {\em groupoid} is a small category in which every arrow is invertible. Given a groupoid $\mathcal{G}$, we will denote by $\mathcal{G}^{(2)}\subset\mathcal{G}\times\mathcal{G}$ the composable pairs, and $\mathcal{G}^{(0)} = \{\gamma^{-1}\gamma: \gamma\in \mathcal{G}\}$ the unit space. The \textit{source} and \textit{range} maps $\mathbf{d}: \mathcal{G}\to \mathcal{G}^{(0)}$ and $\mathbf{r}: \mathcal{G}\to \mathcal{G}^{(0)}$ (respectively) are defined by $\mathbf{d}(\gamma) = \gamma^{-1}\gamma$ and $\mathbf{r}(\gamma) = \gamma\gamma^{-1}$, for $\gamma\in\mathcal{G}$. Given $u\in \mathcal{G}^{(0)}$, we will write $\mathcal{G}_{u} = \mathbf{d}^{-1}(u)$, $\mathcal{G}^{u} = \mathbf{r}^{-1}(u)$, and $\mathcal{G}^u_u = \mathcal{G}_u\cap \mathcal{G}^u$. The \textit{isotropy} $\textup{Iso}(\mathcal{G})$ is defined by
\begin{align*}
\textup{Iso}(\mathcal{G}) = \{\gamma\in\mathcal{G}: \textbf{r}(\gamma) = \textbf{d}(\gamma)\}.
\end{align*}
We recall that the groupoid $\mathcal{G}$ is called \textit{principal} if $\textup{Iso}(\mathcal{G}) = \mathcal{G}^{(0)}$.

A \textit{topological groupoid} is a groupoid $\mathcal{G}$ equipped with a topology under which the multiplication $\mathcal{G}^{(2)}\ni (\alpha, \beta)\mapsto \alpha\beta$ and inverse map $\mathcal{G}\ni \gamma\mapsto \gamma^{-1}$ are continuous. A topological groupoid is called \textit{\'etale} if it is locally compact, $\mathcal{G}^{(0)}$ is Hausdorff in the subspace topology inherited from $\mathcal{G}$, and the maps $\mathbf{d}$ and $\mathbf{r}$ are local homeomorphisms. A \textit{bisection} is a subset $U\subset\mathcal{G}$ such that $\mathbf{d}|_{U}$ and $\mathbf{r}|_{U}$ are local homeomorphisms. We remind the reader that \'etale groupoids are not required to be Hausdorff. It is a standard fact that an \'etale groupoid $\mathcal{G}$ is Hausdorff if and only if $\mathcal{G}^{(0)}$ is closed.

Now let $(\mathcal{S}, \mathcal{X}, \alpha)$ be an inverse semigroup action, where $\mathcal{X}$ is a locally compact Hausdorff space. We associate an \'etale groupoid to $(\mathcal{S}, \mathcal{X}, \alpha)$ as follows (see \cite{EXEL} for details). Set
\begin{align*}\Omega = \{(s, x): x\in D_{s^*s}\}\subseteq\mathcal{S}\times \mathcal{X}.
\end{align*}
We define an equivalence relation on $\Omega$ by 
\begin{align*}
(s, x)\sim (t, y)\Leftrightarrow x = y \text{ and } \exists e\in E_{\mathcal{S}} \text{ with } x\in D_e \text{ and } se = te.
\end{align*}
One defines a groupoid structure on $\mathcal{G} := \Omega/{\sim}$ as follows:
\begin{enumerate}
\item $\mathcal{G}^{(2)} = \{([s, x], [t, y])\in\mathcal{G}\times\mathcal{G}: x = \alpha_{t}(y)\}$;
\item $[s, \alpha_t(x)][t, x] = [st, x]$;
\item $[s, x]^{-1} = [s^*, \alpha_s(x)]$.
\end{enumerate}
Equipped with the quotient topology, $\mathcal{G}$ becomes an \'etale groupoid, with basic open neighbourhoods of the form $\Theta(s, U) = \{[s, x]: x\in U\}$ (for $s\in\mathcal{S}$ and open $U\subseteq\mathcal{X}$). We write $\mathcal{G} = \mathcal{S}\ltimes_{\alpha}\mathcal{X}$, and call $\mathcal{G}$ the \textit{transformation groupoid} (or \textit{groupoid of germs}) of the action $\mathcal{S}\acts_{\alpha} \mathcal{X}$.
\section{The Stone-\v Cech transformation groupoid of an inverse semigroup}

Throughout this section, we identify $\beta\mathcal{S}$ with the space of ultrafilters on the discrete space $\mathcal{S}$. Given $A\subseteq \cS$ we let
\begin{align*}U_A := \{\xi\in\beta\mathcal{S}: A\in\xi\}.
\end{align*}
Sets of this form generate the topology on $\beta\cS$. Each element $s\in \cS$ determines a principal ultrafilter $\{A\subseteq \cS: s\in A\} = \{s\}^\subseteq$, so the canonical embedding described in Example~\ref{sc} is given by $\iota(s) = \{s\}^\subseteq$. In what follows we will drop the $\iota$ and consider $\cS$ as a subset of $\beta\cS$. We then have that $\overline{\cS} = \beta\cS$. Now observe that
\[
U_A = \overline{A};
\]
that is, the basic open set $U_A$ is the closure of $A$ in $\beta\cS$. To see this, note that $\xi\in \overline{A}$ if and only if, for all $B\in\xi$, there is some $a\in A$ with $a\in B$. This condition forces $A\in \xi$, because $\xi$ is an ultrafilter and otherwise one would have $\mathcal{S}\backslash A\in\xi$. Conversely, if $A\in\xi$, then already $A\cap B\neq\emptyset$ for all $B\in\xi$, so $\xi\in \overline{A}$.

Since $\mathcal{S}$ is a discrete space, there is a canonical homeomorphism $f: \beta\mathcal{S}\supset\overline{A}\to\beta A$, for all $A\subset\mathcal{S}$. With ultrafilters, this map is defined by $f(\xi) = \{B\in\xi: B\subseteq A\}$.

We now turn to studying the action of $\cS$ on its Stone-\v Cech compactification and properties of its transformation groupoid $\cS\ltimes\beta\cS$. In \cite{EP16}, Exel and Pardo proved many general results about such actions and groupoids, so we recall some terminology and facts from there. 

For $s\in \mathcal{S}$ we let
\begin{align*}
\cJ_s&:= \{e\in E_\cS: se = e\},\\
F_s &:= \{x\in \overline{s^*\mathcal{S}}: \lambda_s(x) = x\},\\
TF_s &:= \bigcup_{e\in \cJ_s}\overline{e\mathcal{S}}.
\end{align*}
Following the terminology given in \cite{EP16}, the elements of $F_s$ are called the {\em fixed points} of $s$, and the elements of $TF_s$ are called the {\em trivially fixed points} of $s$.

An element $[s,x]$ is in the unit space of $\cS\ltimes \beta\cS$ if and only if $[s,x] = [e,x]$ for some idempotent $e$, so we identify the unit space of $\cS\ltimes\beta\cS$ with $\beta\cS$ via $[e,x]\mapsto x$. If $[s,x] = [e,x]$ for $e\in E_\cS$, there is an idempotent $f$ with $x\in \overline {f\cS}$ with $sf = ef$. Thus $sfe = efe = fe$ and so $fe\in\cJ_s$. We also have $[s,x] =[e,x] = [ef,x]$.

It is clear that $x\in F_s$ if and only if $\lambda_s(x) = \mathbf{r}([s,x]) = \mathbf{d}([s,x]) = x$. In summary, we have observed the following lemma:
\begin{lem}\label{ftf}
Let $\mathcal{S}$ be an inverse semigroup. If $s\in\mathcal{S}$, then
\begin{enumerate}
\item $x\in F_s\iff [s,x]\in\textup{Iso}(\cS\ltimes\beta\cS)$;
\item $x\in TF_s\iff [s,x]\in(\cS\ltimes\beta\cS)^{(0)}$;
\item $TF_s\subseteq F_s$.
\end{enumerate}
Consequently, $\cS\ltimes\beta\cS$ is principal if and only if $TF_s = F_s$ for all $s\in \cS$. 
\end{lem}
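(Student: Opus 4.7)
The plan is to unwind the definitions of the transformation groupoid for parts (1) and (2), and then deduce (3) from them together with one small algebraic fact. First I would fix the identification of $\cG^{(0)}$ with $\beta\cS$ via $[e,x]\mapsto x$, as in the discussion preceding the lemma, so that $\mathbf{d}([s,x]) = x$ and $\mathbf{r}([s,x]) = \lambda_s(x)$. Part (1) is then immediate: $[s,x]\in\textup{Iso}(\cG)$ if and only if $\mathbf{r}([s,x]) = \mathbf{d}([s,x])$, i.e.\ $\lambda_s(x) = x$, i.e.\ $x\in F_s$.

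For (2) I would prove the two directions separately. For $(\Leftarrow)$, given $x\in\overline{e\cS}$ with $e\in\cJ_s$, the idempotent $f := e$ serves as a witness for the equivalence relation since $se = e = e\cdot e$ and $x\in\overline{e\cS} = D_e$; hence $[s,x] = [e,x]\in\cG^{(0)}$. For $(\Rightarrow)$ I would follow the calculation sketched just before the lemma: if $[s,x] = [g,x]$ for some $g\in E_\cS$, extract an idempotent $f$ with $x\in\overline{f\cS}$ and $sf = gf$; then $fg$ is idempotent with $s(fg) = (sf)g = gfg = fg$, so $fg\in\cJ_s$. To place $x$ in $\overline{(fg)\cS}$ I would use the ultrafilter identity $\overline{A}\cap\overline{B} = \overline{A\cap B}$ for $A,B\subseteq\cS$, together with $f\cS\cap g\cS = (fg)\cS$ (the nontrivial inclusion being: if $t = fu = gv$, then $t = gt = gfu = fgu$, using commutativity of $E_\cS$).

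Part (3) then follows from (1) and (2) together with the trivial containment $\cG^{(0)}\subseteq\textup{Iso}(\cG)$. The only subtle point --- and I expect it to be the main obstacle --- is that in order for $(\Leftarrow)$ of (2) to produce a legitimate germ $[s,x]$ when $x\in TF_s$, one needs $TF_s\subseteq\overline{s^*\cS} = D_{s^*s}$. I would establish this by proving the algebraic fact: if $se = e$ for an idempotent $e$, then $e\leqslant s^*s$. Applying the involution to $se = e$ gives $es^* = e$, from which $(s^*e)^2 = s^*(es^*)e = s^*e\cdot e = s^*e$, so $s^*e\in E_\cS$; moreover $e(s^*e) = (es^*)e = e^2 = e$, giving $e\leqslant s^*e$ in the idempotent order. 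Since $ss^*s = s$ and $se = e$ also yield $ss^*e = e$, commutativity of the idempotents $s^*e$ and $s^*s$ gives $(s^*e)(s^*s) = s^*(ss^*e) = s^*e$, so $s^*e\leqslant s^*s$. Transitivity then yields $e\leqslant s^*s$, hence $\overline{e\cS}\subseteq\overline{s^*\cS}$, as required.
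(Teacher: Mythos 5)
Your proof is correct and follows essentially the same route as the paper, which establishes the lemma via the short discussion preceding its statement (identifying $\mathbf{d}([s,x])=x$, $\mathbf{r}([s,x])=\lambda_s(x)$, and extracting the idempotent $fe\in\cJ_s$ from a germ witness). You supply two details the paper leaves implicit --- the identity $\overline{f\cS}\cap\overline{g\cS}=\overline{fg\cS}$ and the verification that $se=e$ forces $e\leqslant s^*s$ (so $TF_s\subseteq\overline{s^*\cS}$ and $[s,x]$ is a legitimate germ) --- both of which are correct and worth making explicit.
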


A key dynamical fact for our purposes is the following consequence of Frol\'ik:

\begin{lem}\label{frolik}
Let $s\in\mathcal{S}$. If $\lambda_s\xi = \xi$ for some $\xi\in \overline{s^*\mathcal{S}}$, then there is a nonempty subset $A\subseteq s^*\mathcal{S}$ such that $A\in\xi$ and $\lambda_s\eta = \eta$, for all $\eta\in \overline{A}$. In particular, $\textup{Iso}(\mathcal{S}\ltimes\beta\mathcal{S})$ is nonempty and open.
\end{lem}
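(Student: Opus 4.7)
The plan is to reduce the first claim to a classical Frol\'ik--Kat\v{e}tov fixed point argument, and then obtain the openness of isotropy as a direct corollary. Set $A := \{t\in s^*\cS : st = t\}$; the heart of the proof is showing $A\in\xi$, equivalently $\xi\in\overline{A}$. Once this is known, continuity of $\lambda_s$ together with the trivial equality $\lambda_s(\iota(t)) = \iota(st) = \iota(t)$ for $t\in A$ yields $\lambda_s|_{\overline{A}} = \id$, so $A$ is the required subset.

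To show $A\in\xi$, I would argue by contradiction. Suppose instead that $B := s^*\cS\setminus A\in\xi$; then the map $t\mapsto st$ takes $B$ into $\cS$ without any fixed points. A Frol\'ik-style coloring argument produces a partition $B = B_1\sqcup B_2\sqcup B_3$ with $sB_i\cap B_i = \emptyset$ for each $i$: form the graph on $B$ with edges $\{t, st\}$ whenever $st\in B$, observe that the orientation $t\mapsto st$ has maximum out-degree $1$, and invoke the standard fact that such graphs are $3$-colorable (reducing to finite subgraphs via de Bruijn--Erd\H{o}s in the infinite case). Since $B\in\xi$, some $B_i$ lies in $\xi$, so $\xi\in\overline{B_i}$ and $\lambda_s(\xi)\in\overline{sB_i}$. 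The basic identity $\overline{X}\cap\overline{Y} = \overline{X\cap Y}$ for $X,Y\subseteq\cS$ forces $\overline{B_i}\cap\overline{sB_i} = \emptyset$, contradicting $\lambda_s(\xi) = \xi$.

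For the ``In particular'' statement, every element of $\textup{Iso}(\cS\ltimes\beta\cS)$ has the form $[s,\eta]$ with $\eta\in F_s$ by Lemma~\ref{ftf}. Applying the first part yields $A\subseteq s^*\cS$ with $\eta\in\overline{A}$ and $\lambda_s|_{\overline{A}} = \id$, so the basic open set $\Theta(s,\overline{A})$ is a neighborhood of $[s,\eta]$ contained entirely in the isotropy, since $\mathbf{d}([s,\zeta]) = \zeta = \lambda_s(\zeta) = \mathbf{r}([s,\zeta])$ for every $\zeta\in\overline{A}$.

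I expect the main obstacle to be the coloring step, complicated by the fact that $t\mapsto st$ is only a partial map from $s^*\cS$ into $\cS$ rather than a self-map; this is handled cleanly by only admitting edges $\{t, st\}$ when both endpoints lie in $B$, which preserves the out-degree-$1$ property needed for $3$-colorability.
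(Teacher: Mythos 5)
Your proof is correct, but it takes a genuinely different route from the paper's. The paper first disposes of principal ultrafilters, then (for a free $\xi$) picks an infinite $B\in\xi$ with infinite complement in $s^*\cS$, extends $\lambda_s|_{B}$ to a global homeomorphism $f$ of $\beta\cS$ fixing $\xi$ (using a bijection between the countable complements), and invokes Frol\'ik's theorem that the fixed-point set of a self-homeomorphism of $\beta\cS$ is open, finally intersecting the resulting neighbourhood with $\overline{B}$. You instead exploit the fact that $\lambda_s$ is induced by an honest map $t\mapsto st$ of the discrete set $s^*\cS$, and run the classical Kat\v{e}tov three-set (3-coloring) argument on the fixed-point-free part $B = s^*\cS\setminus A$; the partial-map issue you flag is handled correctly, and since $t\mapsto st$ is injective the graph is even a disjoint union of paths and cycles, so $3$-colorability is immediate. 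Your approach buys several things: it avoids the extension step (which is where the paper uses countability of $\cS$) and the comparatively heavy Frol\'ik theorem, it works for arbitrary cardinality, and it delivers the sharper identity $F_s = \overline{\{t\in s^*\cS : st = t\}}$, which essentially subsumes Lemma~\ref{lem:Fs_form} as well (note $\{t\in s^*\cS: st=t\} = s^*\cS\cap\bigcup_{e\in\cJ_s}e\cS$). The paper's argument, by contrast, requires no combinatorics and cites a named theorem, at the cost of an ad hoc extension construction. Both establish the unstated but intended point that $\xi\in\overline{A}$, and your derivation of the openness of $\textup{Iso}(\cS\ltimes\beta\cS)$ via the bisection $\Theta(s,\overline{A})$ matches the intended use of the lemma.
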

\vspace{-3mm}
\begin{proof}
The result is plain if $\xi$ is a principal ultrafilter, so we may assume that $\xi$ is a free ultrafilter (and therefore that $s^*\mathcal{S}$ is infinite.) Let $B\subset s^*\mathcal{S}$ be an infinite subset with infinite complement. Either $B\in \xi$ or $s^*\mathcal{S}\backslash B\in\xi$; without loss of generality, take $B\in\xi$. Because $\lambda_s: s^*\mathcal{S}\to s\mathcal{S}$ is a bijection, $sB\subset s\mathcal{S}$ is also an infinite subset with infinite complement. Because $\mathcal{S}\backslash B$ and $\mathcal{S}\backslash sB$ are both countably infinite, we may extend the map $\lambda_s: B\to sB$ to a homeomorphism $f: \beta\mathcal{S}\to \beta\mathcal{S}$ with the property that $f(\xi) = \xi$. By Frol{\'i}k's theorem (\cite{Frolik1971}, Theorem 3.1), there is a basic open neighbourhood $U = \overline{C}$ of $\xi$ ($C\subseteq\mathcal{S}$) such that $f(\eta) = \eta$ for all $\eta\in U$. The result follows after taking $A = B\cap C$.
\end{proof}
We can now derive a nice form for the fixed point set of a given $s\in\cS$.
\begin{lem}\label{lem:Fs_form}
    If $\cS$ is an inverse semigroup and $s\in \cS$, then
    \begin{equation}
        F_s = \overline{\bigcup_{e\in \cJ_s}e\cS}.
    \end{equation}
\end{lem}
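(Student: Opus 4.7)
The plan is to reduce the statement to a set equality at the level of $\cS$ itself and then pass to closures. Identifying $\cS$ with its canonical image in $\beta\cS$, I would first establish
\[
F_s\cap\cS = \bigcup_{e\in\cJ_s}e\cS,
\]
and then deduce $F_s = \overline{F_s\cap\cS}$ from the fact that $F_s$ is both open and closed in $\beta\cS$.

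For the set equality, the inclusion $\supseteq$ requires showing that $e\cS\subseteq s^*\cS$ when $e\in\cJ_s$, so that elements of $e\cS$ actually lie in the domain of $\lambda_s$. The main algebraic point is that $se = e$ forces $s^*se = e$; I would verify this by a short idempotent-commutation computation, setting $f := s^*se = es^*s$ and checking $ef = e$ and $fe = f$, so that $e = f$. It then follows that $e = s^*se\in s^*\cS$, and $\lambda_s(et') = set' = et'$ gives $e\cS\subseteq F_s\cap\cS$. For the reverse inclusion, given $t\in s^*\cS$ with $st = t$, I would take $e := tt^*$: then $se = stt^* = tt^* = e$ puts $e$ in $\cJ_s$, while $t = tt^*t = et\in e\cS$. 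I expect this idempotent-commutation step to be the only point requiring real care.

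To pass from the set equality to the desired closure statement, I would use that $F_s$ is closed in $\beta\cS$ (being the equalizer of the continuous maps $\lambda_s$ and $\id$ on $\overline{s^*\cS}$) and open in $\beta\cS$ (by combining Lemma~\ref{ftf}(1) with the openness of $\textup{Iso}(\cS\ltimes\beta\cS)$ from Lemma~\ref{frolik}). Since $\cS$ is dense in $\beta\cS$, every open subset $U\subseteq\beta\cS$ satisfies $U\subseteq\overline{U\cap\cS}$, so $F_s\subseteq\overline{F_s\cap\cS}\subseteq F_s$, completing the proof.
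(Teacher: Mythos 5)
Your proof is correct and follows essentially the same route as the paper's: the hard inclusion $F_s\subseteq\overline{\bigcup_{e\in\cJ_s}e\cS}$ rests in both cases on Lemma~\ref{frolik}, which you package as openness of $F_s$ (so that the clopen set $F_s$ equals $\overline{F_s\cap\cS}$) while the paper argues ultrafilter-by-ultrafilter that $\bigcup_{e\in\cJ_s}e\cS$ belongs to any fixed $\xi$. Your idempotent computation showing that $se=e$ forces $s^*se=e$ (hence $e\cS\subseteq s^*\cS$, so $e\cS$ really lies in the domain of $\lambda_s$) is correct and usefully makes explicit a detail that the paper's ``reverse containment follows from the continuity of $\lambda_s$'' leaves implicit.
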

\begin{proof}
    Suppose that there is some $\xi\in \overline{s^*\mathcal{S}}$ such that $\lambda_s\xi = \xi$. By Lemma~\ref{frolik}, there exists $A\subseteq s^*\cS$ with $A\in\xi$ such that $U_A = \overline{A}$ is fixed by $\lambda_s$. In particular, we have $sa = a$, for all $a\in A$, so $saa^* = aa^*$ for all $a\in A$. Said differently, $aa^* \in \cJ_s$, for all $a\in A$. Thus,
\[
A\subseteq \bigcup_{a\in A}a\mathcal{S} = \bigcup_{a\in A}aa^*\mathcal{S}\subseteq \bigcup_{e\in \cJ_s}e\mathcal{S} \implies \bigcup_{e\in \cJ_s}e\mathcal{S}\in \xi,
\]
because $\xi$ is a filter and filters are upwards closed. Thus $\xi\in \overline{\bigcup_{e\in \cJ_s}e\mathcal{S}}$, and so $F_s \subseteq \overline{\bigcup_{e\in \cJ_s}e\mathcal{S}}$. Reverse containment follows from the continuity of $\lambda_s$, completing the proof.
\end{proof}
\begin{definition}
    Let $\cG$ be a topological groupoid. We say that 
    \begin{enumerate}
        \item $\cG$ is {\em principal} if $g\in \cG$ and $\mathbf{d}(g) = \mathbf r(g)$ implies $g\in \cG^{(0)}$,
        \item $\cG$ is {\em effective} if \textup{Iso}$(\cG)^\circ = \cG^{(0)}$, and
        \item $\cG$ is {\em essentially principal} if $\{x\in \cG^{(0)} : \cG_x^x= \{x\}\}$ is dense in $\cG^{(0)}$.
    \end{enumerate}
\end{definition}

We point out to the reader that some authors use the term \textit{topologically free} to mean what we have defined here as \textit{essentially principal}.

\begin{prop}
    Let $s\in \cS$ and suppose $[s,x]\in\textup{Iso}(\cS\ltimes\beta\cS)$ for some principal filter $x\in s^*\cS$. Then $[s,x]\in (\cS\ltimes\beta\cS)^{(0)}$. In particular, $\cS\ltimes\beta\cS$ is always essentially principal.
\end{prop}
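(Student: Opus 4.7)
The plan is to reduce the main claim to Lemma~\ref{ftf}(2) by showing that $x\in TF_s$ under the hypotheses, and then deduce essential principality from the density of the principal ultrafilters in $\beta\cS$.

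For the main claim, suppose $x = \iota(t)$ for some $t\in s^*\cS$ (this is what ``principal filter $x\in s^*\cS$'' means after identification). Since $\lambda_s$ is defined on $\overline{s^*\cS}$ and extends left multiplication by $s$, we have $\lambda_s(x) = \iota(st)$. By Lemma~\ref{ftf}(1), the hypothesis $[s,x]\in \textup{Iso}(\cS\ltimes\beta\cS)$ is equivalent to $\lambda_s(x) = x$, which on a principal ultrafilter translates into the semigroup identity $st = t$. Right-multiplying by $t^*$ gives $stt^* = tt^*$, so $tt^*\in \cJ_s$. Since $t = tt^*t\in tt^*\cS$, we obtain $x\in \overline{tt^*\cS}\subseteq TF_s$, and Lemma~\ref{ftf}(2) yields $[s,x]\in (\cS\ltimes\beta\cS)^{(0)}$.

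For essential principality, I would observe that $\iota(\cS)$ is dense in $\beta\cS$, so the set of principal ultrafilters is a dense subset of the unit space. It suffices to show that any principal $x = \iota(t)$ has trivial isotropy group. An arbitrary element of $(\cS\ltimes\beta\cS)_x^x$ has the form $[s,x]$ with $x\in\overline{s^*\cS}$ (so that $[s,x]$ is well defined) and $\lambda_s(x)=x$; since $x$ is principal, $t$ itself must lie in $s^*\cS$, so the main claim applies and gives $[s,x]\in (\cS\ltimes\beta\cS)^{(0)}$. Under the identification $[e,x]\mapsto x$ of units with points of $\beta\cS$, this forces $[s,x]=x$, so $(\cS\ltimes\beta\cS)_x^x = \{x\}$.

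The argument is mostly bookkeeping once Lemma~\ref{ftf} is in hand; I do not anticipate a serious obstacle. The only point requiring care is the translation between the isotropy condition $\mathbf{r}([s,x])=\mathbf{d}([s,x])$ on the groupoid side and the fixed-point condition $\lambda_s(x)=x$ on the space side, together with the observation that $\lambda_s$ restricted to $\cS\subseteq \beta\cS$ is just left multiplication by $s$, so that everything collapses to a short calculation in the semigroup.
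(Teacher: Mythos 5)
Your proposal is correct and takes essentially the same route as the paper: the paper likewise derives $st=t$, hence $stt^*=tt^*$, notes $tt^*\cS$ belongs to the principal filter, and concludes $[s,x]=[stt^*,x]=[tt^*,x]$ is a unit (your detour through $TF_s$ and Lemma~\ref{ftf}(2) is just a repackaging of that same computation), with essential principality then following from density of the principal filters exactly as you argue.
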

\begin{proof}
    We have that $sx = x$ and so $sxx^* = xx^*$. Since $x\in x\mathcal{S} = xx^*\cS$, we have that $xx^*\cS\in \{x\}^{\subseteq}$. Thus $[s,x]= [sxx^*, x] = [xx^*,x]\in (\cS\ltimes\beta\cS)^{(0)}$. Because principal filters are dense in $(\cS\ltimes\beta\cS)^{(0)}$, $\cS\ltimes\beta\cS$ is essentially principal.
\end{proof}

\begin{theo}\label{pr:principal}
For an inverse semigroup $\mathcal{S}$, the following conditions are equivalent:
\begin{enumerate}
\item $\mathcal{S}\ltimes\beta\mathcal{S}$ is Hausdorff;
\item $\mathcal{S}\ltimes\beta\mathcal{S}$ is principal;
\item $\mathcal{S}\ltimes\beta\mathcal{S}$ is effective.
\end{enumerate}
\end{theo}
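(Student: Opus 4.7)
The plan is to isolate the easy equivalence (2) $\Leftrightarrow$ (3) first, using only Lemma~\ref{frolik}, and then to attack (1) $\Leftrightarrow$ (2) by pushing the ``unit space closed'' criterion for Hausdorffness down through the étale bisection charts until it becomes a statement about the sets $TF_s$.

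For (2) $\Leftrightarrow$ (3), I would argue as follows. Since Lemma~\ref{frolik} shows that $\textup{Iso}(\cS\ltimes\beta\cS)$ is open, it coincides with its own interior. Hence the effectiveness condition $\textup{Iso}(\cG)^\circ = \cG^{(0)}$ collapses to $\textup{Iso}(\cG) = \cG^{(0)}$, which is precisely principality.

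For (1) $\Leftrightarrow$ (2), I would invoke the standard fact recalled in the preliminaries that an étale groupoid is Hausdorff if and only if its unit space is closed. The groupoid $\cG = \cS\ltimes\beta\cS$ is covered by the open bisections $\Theta(s, D_{s^*s})$, each homeomorphic to $D_{s^*s}$ via $[s, x]\mapsto x$, and by Lemma~\ref{ftf}(2) the intersection $\cG^{(0)}\cap \Theta(s, D_{s^*s})$ corresponds precisely to $TF_s$ under this homeomorphism (noting $TF_s\subseteq F_s\subseteq D_{s^*s}$ automatically). Consequently, $\cG^{(0)}$ is closed in $\cG$ if and only if $TF_s$ is closed in $D_{s^*s}$ for every $s\in\cS$.

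The remaining step is to recognize that $TF_s$ is closed if and only if $TF_s = F_s$. To see this, I would combine Lemma~\ref{ftf}(3), which gives $TF_s\subseteq F_s$, with Lemma~\ref{lem:Fs_form}, which gives $F_s = \overline{\bigcup_{e\in\cJ_s}e\cS}$: the inclusions $\bigcup_{e\in\cJ_s}e\cS \subseteq TF_s \subseteq F_s$ together with closedness of $F_s$ force $\overline{TF_s} = F_s$. Quantifying the resulting equivalence over all $s$ and using Lemma~\ref{ftf}(1)-(2) once more, ``$TF_s = F_s$ for every $s$'' says precisely that $\textup{Iso}(\cG) = \cG^{(0)}$, i.e., $\cG$ is principal. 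The main obstacle I anticipate is the careful verification of the correspondence $\cG^{(0)}\cap\Theta(s, D_{s^*s}) \leftrightarrow TF_s$, which requires unwinding the germ equivalence relation; everything else is a direct assembly of the preceding lemmas.
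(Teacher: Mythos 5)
Your proposal is correct and follows essentially the same route as the paper: Hausdorffness is reduced to closedness of each $TF_s$ (the Exel--Pardo criterion, which you re-derive from the bisection charts instead of citing), Lemma~\ref{lem:Fs_form} converts that into the condition $TF_s = F_s$, and Lemma~\ref{frolik} (openness of the isotropy) gives $(2)\Leftrightarrow(3)$. The only divergence is in $(2)\Rightarrow(1)$, where the paper uses a soft net argument (two distinct limits of a net share range and source, yielding nontrivial isotropy) while you run the $TF_s = F_s$ characterization in reverse; both are valid.
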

\vspace{-3mm}
\begin{proof}
$(1)\implies(2)$ By \cite{EP16}, $\mathcal{S}\ltimes\beta\mathcal{S}$ is Hausdorff if and only if $TF_s = \bigcup_{e\in \cJ_s}\overline{e\mathcal{S}}$ is closed relative to $\overline{s^*\mathcal{S}}$. Because $\overline{s^*\mathcal{S}}$ is already closed, this is equivalent to $TF_s$ being closed. Lemma~\ref{lem:Fs_form} says that $F_s$ is the smallest closed set that contains $e\mathcal{S}$ for all $e\in \cJ_s$, so $F_s\subseteq TF_s$, and hence $F_s = TF_s$. Because every fixed point is trivially fixed, it follows from Lemma \ref{ftf} that $\mathcal{S}\ltimes\beta\mathcal{S}$ is principal.

$(2)\implies (1)$ if $\cG$ is not Hausdorff, we can find a net $(\g_i)$ in $\cG$ with two distinct limits $\g, \g'$. Because the range and source maps are continuous, we have $r(\g) = r(\g')$ and $s(\g) = s(\g')$. Because $\g\neq\g'$, $\cG$ is not principal, and so by the contrapositive we are done.

$(2)\iff (3)$ follows from Lemma \ref{frolik}.
\end{proof}

In light of Theorem~\ref{pr:principal}, the key structural question one asks is \textit{when is $\cS\ltimes\beta\cS$ Hausdorff}? Below is a criterion that produces examples.

\begin{theo}\label{th:Hausdorff_equality}Let $\cS$ be an inverse semigroup. Then the following conditions are equivalent:
\begin{enumerate}
    \item $\cS\ltimes\beta\cS$ is Hausdorff;
    \item for every $s\in\cS$, there exists a finite set $F\subseteq \cJ_s$ such that $\bigcup_{f\in F}f\cS = \bigcup_{e\in\cJ_s}e\cS$;
    \item for every $s\in \cS$, there exists a finite set $F\subseteq \cJ_s$ such that $\cJ_s = F^\geqslant$.
\end{enumerate} 
\end{theo}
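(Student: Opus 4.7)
The plan is to verify the cycle $(1)\Rightarrow(2)\Rightarrow(3)\Rightarrow(1)$, using as the central tool the characterization extracted from the proof of Theorem~\ref{pr:principal}: by Exel--Pardo together with Lemma~\ref{lem:Fs_form}, condition (1) is equivalent to demanding, for every $s\in\cS$, the equality
\[
F_s = \overline{\bigcup_{e\in\cJ_s}e\cS} = \bigcup_{e\in\cJ_s}\overline{e\cS} = TF_s.
\]

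For $(1)\Rightarrow(2)$, fix $s\in\cS$ and observe that $F_s$ is a closed (hence compact) subset of $\beta\cS$, while each $\overline{e\cS}=U_{e\cS}$ is clopen. By hypothesis the clopens $\{\overline{e\cS}: e\in\cJ_s\}$ cover $F_s$, so compactness yields a finite $F\subseteq\cJ_s$ with $F_s = \bigcup_{f\in F}\overline{f\cS}$. Intersecting with the set of principal ultrafilters $\cS\hookrightarrow\beta\cS$ and using the elementary identity $\overline{A}\cap\cS = A$ (immediate from $\iota(t) = \{A : t\in A\}$) gives
\[
\bigcup_{e\in\cJ_s}e\cS \;=\; F_s\cap\cS \;=\; \bigcup_{f\in F} f\cS.
\]
Conversely, for $(2)\Rightarrow(1)$ it suffices to note that closure distributes over finite unions, so (2) forces $F_s = \overline{\bigcup_{f\in F}f\cS} = \bigcup_{f\in F}\overline{f\cS}\subseteq TF_s$, and the reverse inclusion always holds.

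For the algebraic equivalence $(2)\Leftrightarrow(3)$, the key observation is that for idempotents $e,f\in E_\cS$ one has $e\in f\cS \Longleftrightarrow fe=e \Longleftrightarrow e\leqslant f$, because $f\cS=\{t\in\cS: ft=t\}$ when $f$ is idempotent. I also need two easy structural facts about $\cJ_s$: it is downward closed under $\leqslant$ (if $sf=f$ and $b\leqslant f$, write $b=fb$ and compute $sb = sfb = fb = b$, noting that $b$ is automatically idempotent since $(fe)(fe) = fe$ for idempotents $e,f$), and $e\leqslant f$ implies $e\cS\subseteq f\cS$. Given these, $(3)\Rightarrow(2)$ is immediate by $\bigcup_{e\in\cJ_s}e\cS = \bigcup_{f\in F}\bigcup_{e\leqslant f} e\cS = \bigcup_{f\in F}f\cS$, while $(2)\Rightarrow(3)$ follows because each $e\in\cJ_s$ lies in $\bigcup_{f\in F}f\cS$, forcing $e\leqslant f$ for some $f\in F$, i.e.\ $\cJ_s\subseteq F^\geqslant$; the reverse inclusion $F^\geqslant\subseteq\cJ_s$ is precisely the downward closure of $\cJ_s$.

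No step strikes me as especially delicate, but the most substantive one is $(1)\Rightarrow(2)$: it is where the topological hypothesis gets converted into finiteness, and it requires combining the compactness of $F_s$ with the extraction of an equality at the level of $\cS$ rather than merely of $\beta\cS$. Everything else is either a formal manipulation of closures in $\beta\cS$ or a routine check in the idempotent semilattice.
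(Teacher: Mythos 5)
Your proposal is correct and follows essentially the same route as the paper: reduce (1) to the equality $F_s = TF_s$ via Theorem~\ref{pr:principal}, extract a finite subcover of the compact set $F_s$ by the clopen sets $\overline{e\cS}$, pass from closures back to subsets of $\cS$ (your $\overline{A}\cap\cS = A$ is the paper's ``elements of $\cS$ are isolated''), and handle $(2)\Leftrightarrow(3)$ by the same computations $fe = e \Leftrightarrow e \leqslant f$ and $se = sfe = fe = e$. No substantive differences.
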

\begin{proof}
    $(1)\implies(2)$  By Theorem~\ref{pr:principal}, if $\cS\ltimes \beta\cS$ is Hausdorff, then it is principal and so $F_s = TF_s$ for all $s\in \cS$. We always have $TF_s = \bigcup_{e\in \cJ_s}\overline{e\mathcal{S}}$ and Lemma~\ref{lem:Fs_form} implies $F_s = \overline{\bigcup_{e\in \cJ_s}e\mathcal{S}}$. Thus for all $s\in \cS$ we have $\overline{\bigcup_{e\in \cJ_s}e\mathcal{S}}= \bigcup_{e\in \cJ_s}\overline{e\mathcal{S}}$. This set is compact and each $\overline{e\cS}$ is open, so there is a finite $F\subseteq \cJ_s$ such that 
\[
\overline{\bigcup_{e\in \cJ_s}e\mathcal{S}}= \bigcup_{e\in \cJ_s}\overline{e\mathcal{S}} = \bigcup_{f\in F}\overline{f\mathcal{S}} = \overline{\bigcup_{f\in F}f\mathcal{S}},
\]
where the first equality is the expression $F_s = TF_s$ and the last equality follows as $F$ is finite. Because elements of $\cS$ are isolated in $\beta\cS$, this implies $\bigcup_{f\in F}f\cS = \bigcup_{e\in\cJ_s}e\cS$.

$(2)\implies (3)$ For each $e\in \cJ_s$, there exists $f\in F$ such that $e\in f\cS$ so we can write $e = ft$ for some $t\in \cS$. Thus $fe = fft = ft = e$, implying $e\leqslant f$. Conversely, if $e\leqslant f\in F\subseteq\cJ_s$, $se = sfe = fe = e$ and so $e\in \cJ_s$.

$(3)\implies (2)$ If $e\in \cJ_s$, then by assumption, $e\leqslant f$ for some $f\in F$; so $e\in f\cS$. Thus, we have $\bigcup_{f\in F}f\cS = \bigcup_{e\in\cJ_s}e\cS$.

$(2)\implies (1)$ We have
\[
F_s = \overline{\bigcup_{e\in \cJ_s}e\mathcal{S}} = \overline{\bigcup_{f\in F}f\mathcal{S}} = \bigcup_{f\in F}\overline{f\mathcal{S}}\subseteq \bigcup_{e\in \cJ_s}\overline{e\mathcal{S}} = TF_s,
\]
where the middle equality follows because the union in question is finite. This implies $F_s = TF_s$. By Theorem~\ref{pr:principal}, $\cS\ltimes\beta\cS$ is Hausdorff.
\end{proof}

Two elements $s,t \in\cS$ are called {\em compatible} if both $s^*t$ and $st^*$ are idempotents. Notice that idempotents are always compatible. The inverse semigroup $\cS$ is called {\em complete} if whenever $A\subseteq\cS$ is a set of pairwise compatible elements, then $A$ has a least upper bound in $\cS$ (in the natural partial order); in this case the least upper bound is denoted $\bigvee_{a\in A}a$. An inverse semigroup is called {\em infinitely distributive} if whenever $\bigvee_{a\in A}a\in\cS$ and $s\in \cS$, we have 
\[
s\left(\bigvee_{a\in A}a\right) = \bigvee_{a\in A}sa\hspace{1cm} \text{and}\hspace{1cm}\left(\bigvee_{a\in A}a\right)s = \bigvee_{a\in A}as.
\]
An inverse semigroup is an \textit{abstract pseudogroup} if it is a complete, infinitely distributive inverse monoid (see \cite{lawson}). This notion is not to be confused with other inequivalent definitions of pseudogroup, such as a pseudogroup in the sense of Renault (cf. \cite{cartan}). The symmetric inverse monoid $\mathcal{I}(X)$ is an abstract pseudogroup. An inverse semigroup with zero  $\mathcal{S}$ is \textit{$E^*$-unitary} if $s\in\mathcal{S}$ and $\cJ_s\neq\{0\}$ implies $s\in E_S$. Graph inverse semigroups and free inverse semigroups are $E^*$-unitary \cite{L}. The next result is an immediate consequence of Theorem \ref{th:Hausdorff_equality}.

\begin{cor}\label{hausdorffexample}
If $\mathcal{S}$ is an abstract pseudogroup or an $E^*$-unitary inverse semigroup, then $\cS\ltimes\beta\cS$ is Hausdorff.
\end{cor}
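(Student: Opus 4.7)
My plan is to verify condition (3) of Theorem~\ref{th:Hausdorff_equality}: for each $s\in\cS$, there is a finite $F\subseteq\cJ_s$ with $\cJ_s = F^\geqslant$. In both cases, I expect $F$ to be a singleton.

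For $\cS$ an abstract pseudogroup, I would first observe that any two idempotents in $\cS$ are compatible: since $E_\cS$ is commutative, $e^*f = ef = ef^*$ is idempotent. In particular $\cJ_s$ is a compatible family, so completeness produces the join $e_s := \bigvee_{e\in\cJ_s}e \in \cS$. Applying infinite distributivity on the left yields $s e_s = \bigvee_{e\in\cJ_s}se = \bigvee_{e\in\cJ_s}e = e_s$, and a second application on both sides gives $e_s^2 = \bigvee_{e,f\in\cJ_s}ef$, which equals $e_s$ because each $ef$ is an idempotent bounded above by $e_s$ while the diagonal terms $ee = e$ already have join $e_s$. This places $e_s \in \cJ_s$, and since every $e\in\cJ_s$ satisfies $e\leqslant e_s$ by the universal property of the join, taking $F = \{e_s\}$ verifies (3).

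For $\cS$ an $E^*$-unitary inverse semigroup with zero, I would split according to whether $s\in E_\cS$. If $s\notin E_\cS$, the $E^*$-unitary hypothesis together with the automatic membership $0\in\cJ_s$ (from $s\cdot 0 = 0$) forces $\cJ_s = \{0\}$, and $F = \{0\}$ works trivially. If $s\in E_\cS$, then commutativity of idempotents reduces $se = e$ (for $e\in E_\cS$) to $e\leqslant s$, yielding $\cJ_s = \{s\}^\geqslant$ with $F = \{s\}$.

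The mildest obstacle is confirming that $e_s$ really lies in $\cJ_s$ in the abstract pseudogroup case, which is precisely where infinite distributivity is essential: once to push $s$ through the join and once to see that the join of idempotents remains idempotent. Apart from that step, the corollary is a direct packaging of the defining properties of each class into condition (3) of Theorem~\ref{th:Hausdorff_equality}.
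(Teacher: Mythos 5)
Your proof is correct and follows essentially the same route as the paper: verify Theorem~\ref{th:Hausdorff_equality}(3) with a singleton $F$, namely the join $\bigvee_{e\in\cJ_s}e$ in the pseudogroup case and $\{s\}$ (or $\{0\}$ when $s\notin E_\cS$) in the $E^*$-unitary case. You are in fact slightly more careful than the paper in verifying that $\bigvee_{e\in\cJ_s}e$ is itself an idempotent (so that it genuinely lies in $\cJ_s$), a point the paper's proof leaves implicit.
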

\begin{proof}
    If $\cS$ is an abstract pseudogroup and $s\in \cS$, then $\cJ_s$ is pairwise compatible and hence $e:= \bigvee_{a\in\cJ_s}a\in \cS$. We also have that $se = \bigvee_{a\in\cJ_s}sa = \bigvee_{a\in\cJ_s}a = e$ by distributivity, so $e\in \cJ_s$. The finite set $F = \{e\}$ satisfies Theorem~\ref{th:Hausdorff_equality}(3).
    
    Every transformation groupoid of an $E^*$-unitary inverse semigroup is Hausdorff by \cite{EP16}. Alternatively, observe that if $\mathcal{J}_s$ is nonzero, then $s\in \mathcal{J}_s$, and thus $\mathcal{J}_s = \{s\}^{\geqslant}$.
\end{proof}

The main result of \cite{starling} says that the action of $\cS$ on its spectrum $\widehat{E}_0$ (the space of nonzero semicharacters on $E_{\mathcal{S}}$) is amenable if and only if every action of $\cS$ is amenable. Our last result in this section has a similar flavour, and provides some clear obstructions to the Hausdorffness of $\cS\ltimes \beta\cS$.
\begin{definition}
    Say that an action $(\cS, \cX, \alpha)$ is {\em clopen} if $D^\alpha_e$ is clopen for each $e\in E_{\mathcal{S}}$. 
\end{definition}
\begin{cor}\label{cor:clopen_action}
    For an inverse semigroup $\cS$, the following conditions are equivalent:

    \begin{enumerate}
        \item $\cS\ltimes \beta\cS$ is Hausdorff;
        \item $\cS \ltimes \cX$ is Hausdorff for every clopen action $(\cS, \cX, \alpha)$.
    \end{enumerate}

\end{cor}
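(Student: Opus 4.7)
The plan is to combine Theorem~\ref{th:Hausdorff_equality} with the general Exel--Pardo Hausdorffness criterion for transformation groupoids. The direction $(2)\implies(1)$ is immediate by applying (2) to the canonical action $\cS\acts\beta\cS$, once we observe that this action is itself clopen: by Example~\ref{sc}, each domain $D_e^\lambda = \overline{e\cS} = U_{e\cS}$ is compact and open in $\beta\cS$.

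For the non-trivial direction $(1)\implies(2)$, let $(\cS,\cX,\alpha)$ be a clopen action. The criterion from \cite{EP16} (used in the proof of Theorem~\ref{pr:principal}) applies verbatim to any such action: $\cS\ltimes_\alpha\cX$ is Hausdorff if and only if $TF_s^\alpha := \bigcup_{e\in\cJ_s}D_e^\alpha$ is closed in $D_{s^*s}^\alpha$ for every $s\in\cS$. Fix $s\in\cS$; by Theorem~\ref{th:Hausdorff_equality}(3), there exists a finite $F\subseteq\cJ_s$ with $\cJ_s = F^\geqslant$. The key observation is the monotonicity $e\leqslant f\implies D_e^\alpha\subseteq D_f^\alpha$ for idempotents: since $ef=e$, the domain of $\alpha_e=\alpha_{ef}$ is $D_e^\alpha\cap D_f^\alpha$, forcing $D_e^\alpha\subseteq D_f^\alpha$. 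Hence $TF_s^\alpha = \bigcup_{f\in F}D_f^\alpha$ is a finite union of clopen subsets of $\cX$, hence clopen in $\cX$ and in particular closed in the subspace $D_{s^*s}^\alpha$, completing the verification.

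I anticipate no significant obstacle beyond the two routine verifications above: that the action on $\beta\cS$ is clopen, and that domains of idempotents are monotone under the natural order. The essential content of the corollary is already captured by Theorem~\ref{th:Hausdorff_equality}, which extracts a finite subset of $\cJ_s$ whose domains cover $TF_s^\alpha$; clopenness of the action then upgrades this finite cover from giving closedness of $TF_s$ inside $\overline{s^*\cS}$ (the $\beta\cS$ case) to giving closedness of $TF_s^\alpha$ inside $D_{s^*s}^\alpha$ for an arbitrary clopen action.
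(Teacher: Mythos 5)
Your proof is correct and follows essentially the same route as the paper: both directions use the clopenness of the canonical action on $\beta\cS$ for $(2)\implies(1)$, and for $(1)\implies(2)$ both extract the finite set $F\subseteq\cJ_s$ from Theorem~\ref{th:Hausdorff_equality}, use the monotonicity $e\leqslant f\implies D_e^\alpha\subseteq D_f^\alpha$ to write $TF_s^\alpha$ as a finite union of clopen sets, and conclude via the Exel--Pardo closedness criterion. The only difference is that you spell out the monotonicity of domains, which the paper asserts without proof.
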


\begin{proof}
We need only check (1)$\implies$(2), as the action of $\cS$ on $\beta\cS$ is clopen.
   
Let $s\in\mathcal{S}$. By assumption, $\alpha_s: D^\alpha_{s^*s}\to D^\alpha_{ss^*}$ a homeomorphism between clopen sets. Writing $TF^\alpha_s$ for the trivially fixed points for the action $\alpha$, we have
\[
TF^\alpha_s = \bigcup_{e\in \cJ_s}D^\alpha_e.
\]
Applying Theorem~\ref{th:Hausdorff_equality}, we can find a finite set $F\subseteq \cJ_{s}$ such that, for all $e\in \cJ_s$, there exists $f\in F$ with $e\leqslant f$. Since $D_e^{\alpha}\subseteq D_f^{\alpha}$ whenever $e\leqslant f$, we have
\[
TF^\alpha_s = \bigcup_{e\in \cJ_s}D^\alpha_e = \bigcup_{f\in F}D^\alpha_f.
\]
By assumption, each $D^\alpha_f$ is closed. Since $F$ is finite, $TF^\alpha_s$ is closed, so by \cite{EP16}*{Theorem~3.15}, $\cS\ltimes\cX$ is Hausdorff.
\end{proof}

 \begin{cor}
     If $\cS\ltimes\beta\cS$ is Hausdorff, then Exel's tight groupoid of $\cS$, Exel's universal groupoid of $\cS$, and Paterson's universal groupoid of $\cS$ are all Hausdorff. 
 \end{cor}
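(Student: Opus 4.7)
The plan is to apply Corollary~\ref{cor:clopen_action} three times, once for each of the three named groupoids. Recall that each of them can be realised as a transformation groupoid of a canonical action of $\cS$ on a space of filters/characters on $E_\cS$: Exel's universal groupoid is $\cS\ltimes\widehat{E}_\cS$, where $\widehat{E}_\cS$ is the spectrum of nonzero semicharacters on $E_\cS$; Exel's tight groupoid is the restriction to the tight spectrum $\widehat{E}_{\text{tight}}\subseteq\widehat{E}_\cS$; and Paterson's universal groupoid is naturally isomorphic to Exel's universal groupoid (see \cite{EXEL}). In each case the unit space is locally compact Hausdorff.

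The next step is to verify that each of these three actions is clopen. For each $e\in E_\cS$ the domain $D_e$ of $\alpha_e$ is the basic set $\{\phi\in\cX:\phi(e)=1\}$, equivalently the set of filters on $E_\cS$ (of the appropriate kind) that contain $e$. It is standard that these basic sets are compact and open in the corresponding unit space, and because that space is Hausdorff, compact open subsets are automatically clopen. Hence each action is clopen in the sense of the definition preceding Corollary~\ref{cor:clopen_action}.

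With the clopen hypothesis verified, Corollary~\ref{cor:clopen_action} immediately yields that all three groupoids are Hausdorff whenever $\cS\ltimes\beta\cS$ is Hausdorff. The only non-trivial aspect of the argument is invoking the well-known identifications of Exel's and Paterson's universal groupoids (and the tight groupoid) with transformation groupoids in the framework of this paper; this is a bookkeeping step rather than a genuine obstacle, and no new technical difficulty appears.
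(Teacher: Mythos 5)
Your proposal is correct and is exactly the argument the paper intends: the corollary is stated without proof immediately after Corollary~\ref{cor:clopen_action}, the point being that the tight, Exel universal, and Paterson universal groupoids are all transformation groupoids of actions on filter/character spaces whose domains $D_e$ are compact open (hence clopen in a Hausdorff unit space), so Corollary~\ref{cor:clopen_action}(2) applies. Your verification of the clopen hypothesis is the right (and only) detail to check.
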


For more details, Exel's universal and tight groupoid constructions can be found in \cite{EXEL}; Paterson's universal groupoid construction can be found in \cite{P}. Negating Corollary~\ref{cor:clopen_action} then shows that we can find many examples where $\cS\ltimes\beta\cS$ is not Hausdorff (and hence not principal), namely any example where the tight groupoid is not Hausdorff. See \cite{CEPSS}*{Example~5.6} for an example arising from the Grigorchuk group.

We thank the anonymous referee for the following example, which shows that Corollary~\ref{cor:clopen_action} does not hold for non-clopen actions. Let $\mathcal{S} = \{0, s, 1\}$, where $0$ and $1$ act as zero and identity elements of $\mathcal{S}$, and $s^2 = 1$. Since $\mathcal{S}$ is finite, $\mathcal{S}\ltimes\beta\mathcal{S}$ is  discrete, and in particular Hausdorff. However, $\mathcal{S}$ acts trivially on $[0, 1]$, with $D_1 = D_s = [0, 1]$ and $D_0 = [0, 1)$; the resulting action groupoid $\mathcal{S}\ltimes[0, 1]$ is not Hausdorff, because the elements $[s, 1]$ and $[0, 1]$ cannot be separated by disjoint neighbourhoods.

\section{Amenability and the uniform Roe algebra}

In this last, self-contained section, we clarify the relationship between several crossed product constructions associated to the Stone-\v Cech action $\lambda: \mathcal{S}\acts \beta\mathcal{S}$, and relate the amenability of the Stone-\v Cech action to the nuclearity and exactness of these constructions. 

We begin by collecting several definitions and existing results.

\subsection*{The uniform Roe algebra $\mathcal{R}_{\mathcal{S}}$.} Given an inverse semigroup $\mathcal{S}$, we define the \textit{left regular representation} $\lambda: \mathcal{S}\to \mathcal{B}(\ell^2(\mathcal{S}))$ by
\begin{align*}
\lambda_s\delta_t = \begin{cases}
\delta_{st} & t\in s^*\mathcal{S};\\
0 & \text{otherwise.}
\end{cases}
\end{align*}
The C$^*$-algebra generated by $\{\lambda_s: s\in\mathcal{S}\}$ in $\mathcal{B}(\ell^2(\mathcal{S}))$ is called the \textit{reduced \textup{C}$^*$-algebra} of $\mathcal{S}$, denoted by $\textup{C}^*_r(\mathcal{S})$.

The C$^*$-algebra generated by $\{M_f\lambda_s: s\in\mathcal{S}, f\in\ell^{\infty}(\mathcal{S})\}$, where $\ell^{\infty}(\mathcal{S})\ni f\mapsto M_f\in \mathcal{B}(\ell^2(\mathcal{S}))$ is the diagonal embedding, is called the \textit{uniform Roe algebra} of $\mathcal{S}$, denoted by $\mathcal{R}_{\mathcal{S}}$. Note that we have an inclusion $\textup{C}^*_r(\mathcal{S})\subset\mathcal{R}_{\mathcal{S}}$.

For the convenience of the reader, we recall the following terminology from \cite{lledomartinez}: an inverse semigroup $\mathcal{S}$ has the property (FL) if there is a symmetric, generating subset $K\subset\mathcal{S}$, a natural number $N\in\mathbb{N}$, and a finite subset $K_0\subset K$ such that: for all $s, t\in\mathcal{S}$ with $s^*s = t^*t$ and $t\in Ks$, we have $y\in (K_1)^nx$, for some $n\leq N$. Clearly, every finitely generated inverse semigroup has (FL); on the other hand, the free inverse semigroup on countably many generators has (FL) (see \cite{lledomartinez}*{Proposition~3.18}.) The relevance of this technical condition is in the following theorem, which generalizes \cite{ozawa}*{Theorem 3}:

\begin{theo}[\cite{lledomartinez}*{Theorem~4.18}]
If $\mathcal{S}$ is an inverse semigroup with \textup{(FL)}, then the following conditions are equivalent:
\begin{enumerate}
\item $\textup{C}^*_r(\mathcal{S})$ is exact;
\item $\mathcal{R}_{\mathcal{S}}$ is exact;
\item $\mathcal{R}_{\mathcal{S}}$ is nuclear.
\end{enumerate}
\end{theo}

\subsection*{$\mathcal{R}_{\mathcal{S}}$ as the image of a covariant representation.} An \textit{action} of an inverse semigroup $\mathcal{S}$ on a unital C$^*$-algebra $\mathcal{A}$ is a triple $(\mathcal{S}, \mathcal{A}, \alpha)$, where $\alpha: \mathcal{S}\to \mathcal{I}(\mathcal{A})$ is a semigroup homomorphism such that, for each $s\in\mathcal{S}$,
\begin{enumerate}
\item The domain and codomain of $\alpha_s$ is a closed, two-sided ideal in $\mathcal{A}$;
\item The partial bijection $\alpha_s$ is a $*$-homomorphism.
\end{enumerate}
Given an idempotent $e\in E_{\mathcal{S}}$, we set $J_e = \text{dom}(\alpha_e)$. We observe that $\alpha_s$ is a $*$-isomorphism of $J_{s^*s}$ onto $J_{ss^*}$.

Gelfand duality induces a correspondence between actions of $\mathcal{S}$ on compact Hausdorff spaces and actions of $\mathcal{S}$ on commutative unital C$^*$-algebras. Given an action $\alpha: \mathcal{S}\acts\mathcal{X}$, one obtains an inverse semigroup action $\hat{\alpha}: \mathcal{S}\acts C(\mathcal{X})$ by setting $J_{ss^*} = \{f\in C(\mathcal{X}): f|_{\mathcal{X}\backslash D_{ss^*}}\equiv 0\}$ and defining $\hat{\alpha}: J_{s^*s}\to J_{ss^*}$ by $\hat{\alpha}_sf = f\circ\alpha_{s^*}$ ($f\in J_{s^*s}$). Our main interest in this section is the following example:

\begin{ex}[\cite{lledomartinez}*{Proposition~2.4}]\label{ellinfty}
Let $\mathcal{A} = \ell^{\infty}(\mathcal{S})$. For each $e\in E_{\mathcal{S}}$, define $J_e = \{f\in \mathcal{A}: \text{supp}(f)\subseteq e\mathcal{S}\}$. For each $s\in\mathcal{S}$, define $\alpha_s: J_{s^*s}\to J_{ss^*}$ by $[\alpha_sf](t) = f(s^*t)$. The mapping $s\in\mathcal{S}\mapsto \alpha_s\in \mathcal{I}(\mathcal{S})$ is an action of $\mathcal{S}$ on $\mathcal{A}$.
\end{ex}
Using the canonical isomorphism $C(\beta \mathcal{S})\cong \ell^{\infty}(\mathcal{S})$, the reader can easily verify that the action $\alpha: \mathcal{S}\acts \ell^{\infty}(\mathcal{S})$ is the contravariant action $\hat{\lambda}$ obtained from the Stone-\v Cech action $\lambda:\mathcal{S}\acts \beta\mathcal{S}$ defined in Example \ref{sc}.

A \textit{covariant representation} of an inverse semigroup action $\alpha: \mathcal{S}\acts\mathcal{A}$ on a unital C$^*$-algebra $\mathcal{A}$ is a triple $(\mathcal{H}, \pi, \sigma)$, where $\mathcal{H}$ is a Hilbert space, $\pi: \mathcal{A}\to \mathcal{B}(\mathcal{H})$ is a $*$-homomorphism, and $\sigma: \mathcal{S}\to \mathcal{B}(\Hi)$ is a semigroup homomorphism (under composition) such that, for all $e\in E_{\mathcal{S}}$, $s\in\mathcal{S}$, and $a\in J_{s^*s}$,
\begin{enumerate}
\item $\sigma_{s^*} = \sigma_s^*$;
\item $\sigma_{s^*}\pi(a)\sigma_s = \pi(\alpha_sa)$, and
\item (Non-degeneracy) $\overline{\pi(J_{e})\mathcal{H}}=\sigma_e\mathcal{H}$.
\end{enumerate}

In \cite{lledomartinez}*{Section 2}, the following triple $(\Hi, \pi, \sigma)$ is considered for the action $\mathcal{S}\acts\ell^{\infty}(\mathcal{S})$: $\Hi = \ell^2(\mathcal{S})\otimes\ell^2(\mathcal{S})$, and the maps $\pi: \ell^{\infty}(\mathcal{S})\to \mathcal{B}(\Hi)$ and $\sigma: \mathcal{S}\to\mathcal{B}(\Hi)$ are determined by the formulas
\begin{align*}
[\pi(f)](\delta_s\otimes \delta_t) = \begin{cases}
f(ts)\delta_s\otimes \delta_t & s\in t^*\mathcal{S}\\
0 & \text{otherwise}
\end{cases}
\end{align*}
and
\begin{align*}
[\sigma_r](\delta_s\otimes\delta_t) = \begin{cases}
\delta_{s}\otimes\delta_{rt} & t\in r^*\mathcal{S}\\
0 & \text{otherwise.}
\end{cases}
\end{align*}
It is easily verified that the triple $(\Hi, \pi, \sigma)$ satisfies the first two conditions of a covariant representation. The non-degeneracy condition (3) is immediate, since $\pi(J_e)\mathcal{H} = \pi(\ell^{\infty}(e\mathcal{S}))\mathcal{H} = \ell^2(\mathcal{S})\otimes \ell^2(e\mathcal{S}) = \sigma_e\mathcal{H}$ (where the embeddings $\ell^{p}(e\mathcal{S})\subset \ell^{p}(\mathcal{S})$ for $p\in \{2, \infty\}$ are obtained by setting $f(t) = 0$ for $t\not\in e\mathcal{S}$, $f\in \ell^p(e\mathcal{S})$.) Thus, the triple $(\Hi, \pi, \sigma)$ forms a covariant representation for the action $\mathcal{S}\acts \ell^{\infty}(\mathcal{S})$. We denote the C$^*$-subalgebra generated by $(\Hi, \pi, \sigma)$ by $\ell^{\infty}(\mathcal{S})\rtimes_{rr}\mathcal{S}\subset \mathcal{B}(\ell^2(\mathcal{S})\otimes\ell^2(\mathcal{S}))$. 

\textbf{Note:} in \cite{lledomartinez}, the C$^*$-algebra $\ell^{\infty}(\mathcal{S})\rtimes_{rr}\mathcal{S}$ is denoted by ``$\ell^{\infty}(\mathcal{S})\rtimes_{r}\mathcal{S}$.'' We will reserve this notation for the reduced crossed product of Buss-Exel-Meyer \cite{bussexel1}, defined shortly. For now, we record the following result:

\begin{theo}[\cite{lledomartinez}*{Theorem 2.5}]
If $\mathcal{S}$ is an inverse semigroup, then the mapping $\pi(f)\sigma_s\mapsto M_f\lambda_s$ determines an isomorphism
\begin{align*}
\ell^{\infty}(\mathcal{S})\rtimes_{rr}\mathcal{S}\cong\mathcal{R}_{\mathcal{S}}.
\end{align*}
\end{theo}

\subsection{Reduced groupoid C$^*$-algebras} We will shortly compare the above constructions to the reduced crossed product of Buss, Exel, and Meyer, which is defined in full generality in \cite{bussexel1}. Our working definition for this construction in the present note will be from a groupoid model. To keep this section brief, we will only summarize the key facts we need; for details on the construction of the reduced C$^*$-algebra of general \'etale groupoids, see \cite{bussexel1}. Recall that if $\mathcal{G}$ is an \'etale groupoid, then the $*$-vector space $B_c(\mathcal{G})$ of compactly supported Borel functions on $\mathcal{G}$ is equipped with a $*$-algebra structure under the convolution product
\begin{align*}
f*g(\gamma) = \sum_{\alpha\beta = \gamma}f(\alpha)g(\beta).
\end{align*}

The set $\mathscr{C}_c(\mathcal{G}) := \text{span}(\bigcup_{U\in\mathscr{B}}C_c(U))\subset B_c(\mathcal{G})$, where $\mathscr{B}$ is the set of open bisections in $\mathcal{G}$, is a $*$-subalgebra. For each $x\in \mathcal{G}^{(0)}$, let $\rho_x: \mathscr{C}_c(\mathcal{G})\to \mathcal{B}(\ell^2(\mathcal{G}_x))$ denote the $*$-representation of $\mathscr{C}_c(\mathcal{G})$ given by
	\[
	\rho_x(f)\delta_\g = \sum_{\alpha\in\mathcal{G}_{\mathbf{r}(x)}}f(\alpha)\delta_{\alpha\g}, \hspace{1cm} f\in \mathscr{C}_c(\mathcal{G}), \g\in \mathcal{G}_x.
	\]
	The {\em reduced \textup{C$^*$}-algebra} of $\mathcal{G}$, denoted $\textup{C}_r^*(\mathcal{G})$, is the completion of the image of $\mathscr{C}_c(\mathcal{G})$ under the representation $\bigoplus_{x\in \mathcal{G}^{(0)}}\rho_x$. By \cite{renault1980}*{Proposition 1.11}, this representation is faithful. In particular, if $\mathcal{G}^{(0)}$ is compact, then we have a canonical inclusion $C(\mathcal{G}^{(0)})\subset \textup{C}^*_r(\mathcal{G})$.

Let $A$ be a unital C$^*$-algebra, and $B\subset A$ a unital C$^*$-subalgebra. Recall that a \textit{weak conditional expectation} $\mathbb{E}: A\to B$ is a unital completely positive map $\mathbb{E}: A\to B^{**}$ such that $\mathbb{E}|_{B} = \text{id}_B$ and $\mathbb{E}(bab') = b\mathbb{E}(a)b'$, for all $a\in A$, $b, b'\in B$.

\begin{theo}\label{wce}\cite{renault1980}*{Proposition 4.8}
Let $\mathcal{G}$ be an \'etale groupoid with compact unit space $\mathcal{G}^{(0)}$.
\begin{enumerate}
\item There is a faithful weak conditional expectation $\mathbb{E}: \textup{C}^*_r(\mathcal{G})\to C(\mathcal{G}^{(0)})^{**}$, which has range $C(\mathcal{G}^{(0)})$ if and only if $\mathcal{G}$ is Hausdorff.
\item If $\mathcal{G}$ is Hausdorff, then $\mathscr{C}_c(\mathcal{G}) = C_c(\mathcal{G})$, and for all $f\in C_c(\mathcal{G})$ we have $\mathbb{E}(f) = f|_{\mathcal{G}^{(0)}}.$
\end{enumerate}
\end{theo}

Now let $\mathcal{S}$ be a unital inverse semigroup acting on a compact Hausdorff space $\mathcal{X}$, and consider $\mathcal{G} = \mathcal{S}\ltimes\mathcal{X}$. Given $s\in \mathcal{S}$ and $f\in J_{ss^*}$, we define an element $fs\in \mathscr{C}_c(\mathcal{G})$ by
\begin{align*}
fs([t, x]) = \begin{cases}
f([t, x]) & \text{if }[t, x] = [s, x];\\
0 & \text{otherwise.}
\end{cases}
\end{align*}

\begin{theo}\label{algebraiccrossedproduct}
Let $\mathcal{S}$, $\mathcal{X}$, and $\mathcal{G}$ be as above, and let $C(\mathcal{X})\rtimes_r \mathcal{S}$ be the reduced crossed product of $(\mathcal{S}, \mathcal{X})$ in the sense of \cite{bussexel1}.

\begin{enumerate}
\item \cite{EXEL}*{Proposition 3.10} The functions $fs$ $(s\in\mathcal{S}, f\in J_{ss^*})$ span $\mathscr{C}_c(\mathcal{G})$.
\item \cite{EXEL}*{Theorem 8.5} If $(\Hi, \pi, \sigma)$ is a covariant representation of $(\mathcal{S}, \mathcal{X})$, then the formula
\begin{align*}
(\pi\times\sigma)(fs) = \pi(f)\sigma_s
\end{align*}
determines a unique $*$-representation $\pi\times\sigma: \mathscr{C}_c(\mathcal{G})\to \mathcal{B}(\Hi)$.
\item\cite{bussexel}*{Theorem 4.11} $\textup{C}^*_r(\mathcal{G}) \cong C(\mathcal{X})\rtimes_r \mathcal{S}$.
\end{enumerate}
\end{theo}

\begin{prop}\label{completion}
Let $\mathcal{S}$ be an inverse semigroup. If $\mathcal{S}\rtimes\beta\mathcal{S}$ is Hausdorff, then:
\begin{enumerate}
\item The $*$-homomorphism $\pi\times\sigma: C_c(\mathcal{S}\rtimes\beta\mathcal{S})\to \ell^{\infty}(\mathcal{S})\rtimes_{rr}\mathcal{S}$ is faithful. 
\item The faithful conditional expectation $\mathbb{E}: C_c(\mathcal{S}\rtimes\beta\mathcal{S})\to \ell^{\infty}(\mathcal{S})$ descends to a faithful conditional expectation on $\ell^{\infty}(\mathcal{S})\rtimes_{rr}\mathcal{S}$.
\end{enumerate}
\end{prop}

\begin{proof}
We prove (2) first. This follows from Theorem \ref{algebraiccrossedproduct}(1) because
\begin{align*}
[\mathbb{E}(fs)]\delta_t = \langle M_f\lambda_s\delta_t, \delta_t\rangle,
\end{align*}
for all $s, t\in\mathcal{S}$ and $f\in J_{ss^*}$, and because the mapping 
\begin{align*}
\mathbb{E}':\mathcal{B}(\ell^2(\mathcal{S}))\ni T\mapsto [\mathcal{S}\ni t\mapsto \langle T\delta_t, \delta_t\rangle\in\C]\in \ell^{\infty}(\mathcal{S})
\end{align*}
is a faithful conditional expectation.

(1) follows because if $a\in C_c(\mathcal{S}\rtimes\beta\mathcal{S})$, then $(\pi\times\sigma)(a) = 0\Leftrightarrow(\pi\times\sigma)(a^*a) = 0\Leftrightarrow\mathbb{E}'((\pi\times\sigma)(a^*a)) = 0\Leftrightarrow\mathbb{E}(a^*a) = 0\Leftrightarrow a = 0$.
\end{proof}

\begin{cor}\label{isomorphism}
If $\mathcal{S}$ is an inverse semigroup, then the following conditions are equivalent:
\begin{enumerate}
\item $\mathcal{S}\ltimes\beta\mathcal{S}$ is Hausdorff;
\item The map $fs\mapsto M_f\lambda_s$ extends to an isomorphism $\ell^{\infty}(\mathcal{S})\rtimes_{r}\mathcal{S}\cong\ell^{\infty}(\mathcal{S})\rtimes_{rr}\mathcal{S}$.
\end{enumerate}
\end{cor}
\vspace{-3mm}
\begin{proof}
If $\mathcal{S}\ltimes\beta\mathcal{S}$ is Hausdorff, then by Proposition \ref{completion}, both $\mathcal{A}_1 = \ell^{\infty}(\mathcal{S})\rtimes_{rr}\mathcal{S}$ and $\mathcal{A}_2 = \ell^{\infty}(\mathcal{S})\rtimes_{r}\mathcal{S}$ are completions of $C_c(\mathcal{S}\rtimes\beta\mathcal{S})$ which admit faithful extensions of the conditional expectation $\mathbb{E}$. It follows that $\mathcal{A}_1$ and $\mathcal{A}_2$ act faithfully on the left multiplication Hilbert $\ell^{\infty}(\mathcal{S})$-modules $L^2(\mathcal{A}_1, \mathbb{E})$ and $L^2(\mathcal{A}_2, \mathbb{E})$ (respectively), and that the identity map on \hbox{$C_c(\mathcal{S}\rtimes\beta\mathcal{S})$} extends to a unitary $\mathcal{U}: L^2(\mathcal{A}_1, \mathbb{E})\to L^2(\mathcal{A}_2, \mathbb{E})$, which implements an isomorphism between $\mathcal{A}_1$ and $\mathcal{A}_2$.

Conversely, if $\mathcal{S}\ltimes \beta\mathcal{S}$ is not Hausdorff, then (by Theorem \ref{wce}(1)) the weak conditional expectation $\mathbb{E}: \mathcal{A}_2\to \ell^{\infty}(\mathcal{S})^{**}$ takes values outside of $\ell^{\infty}(\mathcal{S})$, while the conditional expectation $\mathbb{E}': \mathcal{A}_1\to \ell^{\infty}(\mathcal{S})$ from the proof of Proposition \ref{completion} does not. If the mapping $fs\mapsto M_f\lambda_s$ extended to an isomorphism between $\mathcal{A}_1$ and $\mathcal{A}_2$, then by Proposition \ref{completion}, we would have $\mathbb{E}' = \mathbb{E}$, a contradiction.
\end{proof}

We now arrive at the main theorem of this section.

\begin{theo}
Suppose $\mathcal{S}$ is a countable inverse semigroup with the property \textup{(FL)} of Lled\'o and Mart\'inez. If $\mathcal{S}\ltimes\beta\mathcal{S}$ is Hausdorff, then the action $\mathcal{S}\acts\beta\mathcal{S}$ is amenable \textup{(}in the sense of \cite{starling}\textup{)} if and only if $\textup{C}^*_r(\mathcal{S})$ is exact.
\end{theo}

\begin{proof}
By \cite{lledomartinez}*{Theorem 4.18}, $\textup{C}^*_r(\mathcal{S})$ is exact if and only if $\mathcal{R}_\mathcal{S}\cong \ell^{\infty}(\mathcal{S})\rtimes_{rr}\mathcal{S}$ is nuclear. By Theorem \ref{isomorphism}, this is equivalent to the nuclearity of $\ell^{\infty}(\mathcal{S})\rtimes_{r}\mathcal{S}\cong \textup{C}^*_r(\mathcal{S}\ltimes\beta\mathcal{S})$. By \cite{AR}*{ Corollary 6.2.14, Theorem 3.3.7}, this is equivalent to the amenability of $\mathcal{S}\ltimes\beta\mathcal{S}$.
\end{proof}

We conclude with a brief discussion. In light of the above theorem, the following question seems natural: What is the relationship between the amenability of $\mathcal{S}\ltimes\beta\mathcal{S}$ and the exactness of $\textup{C}^*_r(\mathcal{S})$ when $\mathcal{S}\ltimes\beta\mathcal{S}$ is non-Hausdorff? We remark that it was shown in \cite{BM}*{Theorem A} and independently in \cite{bghx}*{Theorem F} that, for a (possibly non-Hausdorff) $\sigma$-compact \'etale groupoid $\mathcal{G}$, the amenability of $\mathcal{G}$ is equivalent to the nuclearity of $\textup{C}^*_r(\mathcal{G})$. Since $\mathcal{S}\ltimes\beta\mathcal{S}$ is $\sigma$-compact whenever $\mathcal{S}$ is countable, these results show that the action $\mathcal{S}\acts\beta\mathcal{S}$ is amenable in the sense of \cite{starling} if and only if the Buss-Exel-Meyer reduced crossed product $\ell^{\infty}(\mathcal{S})\rtimes_r \mathcal{S}$ is nuclear.

\nocite{*}
\bibliographystyle{plain}
\bibliography{references}

@article {EXEL,
    AUTHOR = {Exel, Ruy},
     TITLE = {Inverse semigroups and combinatorial \textup{C*}-algebras},
   JOURNAL = {Bull. Braz. Math. Soc. (N.S.)},
  FJOURNAL = {Bulletin of the Brazilian Mathematical Society. New Series.
              Boletim da Sociedade Brasileira de Matem\'atica},
    VOLUME = {39},
      YEAR = {2008},
    NUMBER = {2},
     PAGES = {191--313},
      ISSN = {1678-7544,1678-7714},
   MRCLASS = {46L05 (18B40 46L55)},
  MRNUMBER = {2419901},
MRREVIEWER = {Mark\ Tomforde},
       DOI = {10.1007/s00574-008-0080-7},
       URL = {https://doi.org/10.1007/s00574-008-0080-7},
}

@article {EP16,
    AUTHOR = {Exel, Ruy and Pardo, Enrique},
     TITLE = {The tight groupoid of an inverse semigroup},
   JOURNAL = {Semigroup Forum},
  FJOURNAL = {Semigroup Forum},
    VOLUME = {92},
      YEAR = {2016},
    NUMBER = {1},
     PAGES = {274--303},
      ISSN = {0037-1912,1432-2137},
   MRCLASS = {20M18 (20M20 22A22 46L05)},
  MRNUMBER = {3448414},
MRREVIEWER = {Desmond\ G.\ FitzGerald},
       DOI = {10.1007/s00233-015-9758-5},
       URL = {https://doi.org/10.1007/s00233-015-9758-5},
}

@article{starling, title={Amenable actions of inverse semigroups}, volume={37}, DOI={10.1017/etds.2015.60}, number={2}, journal={Ergodic Theory and Dynamical Systems}, author={Exel, Ruy and Starling, Charles}, year={2017}, pages={481–489}}

@mastersthesis{gondek24,
    author ={Joseph Gondek} ,
    title = {On the amenability of inverse semigroup actions},
    school = {Carleton University},
    year = {2024},
address = {Carleton University, Ottawa, Canada}
}

@inproceedings{Renault1980,
  title={A Groupoid Approach to C*-Algebras},
  author={Jean N. Renault},
  year={1980},
  url={https://api.semanticscholar.org/CorpusID:118659796}
}

@article{cartan,
author = {Renault, Jean},
year = {2008},
month = {},
pages = {},
title = {Cartan subalgebras in \textup{C*}-algebras},
volume = {61},
journal = {Irish Mathematical Society Bulletin},
doi = {10.33232/BIMS.0061.29.63}
}

@inProceedings{Frolik1971,
author = {Frolík, Zdeněk},
booktitle = {General Topology and Its Relations to Modern Analysis and Algebra},
keywords = {Kanpur; Proceedings; Topology; Conference},
location = {Praha},
pages = {[131]-142},
publisher = {Academia Publishing House of the Czechoslovak Academy of Sciences},
title = {Maps of extremally disconnected spaces, theory of types, and applications},
url = {http://eudml.org/doc/220559},
year = {1971},
}

@misc{BM,
      title={Essential groupoid amenability and nuclearity of groupoid \textup{C*}-algebras}, 
      author={Alcides Buss and Diego Martínez},
      year={2025},
      eprint={2501.01775},
      archivePrefix={arXiv},
      primaryClass={math.OA},
      url={https://arxiv.org/abs/2501.01775}, 
}

@article{bussexel,
 ISSN = {03794024, 18417744},
 URL = {http://www.jstor.org/stable/24716012},
 
 author = {Alcides Buss and Ruy Exel},
 journal = {Journal of Operator Theory},
 number = {1},
 pages = {153--205},
 publisher = {Theta Foundation},
 title = {FELL BUNDLES OVER INVERSE SEMIGROUPS AND TWISTED {\'e}TALE GROUPOIDS},
 urldate = {2025-07-02},
 volume = {67},
 year = {2012}
}

@article{bussexel1,
author = {Buss, Alcides and Exel, Ruy and Meyer, Ralf},
year = {2015},
pages = {},
title = {Reduced \textup{C*}-algebras of Fell bundles over inverse semigroups},
volume = {220},
journal = {Israel Journal of Mathematics},
doi = {10.1007/s11856-017-1516-9}
}

@book{L,
    author    = "Mark V. Lawson",
    title     = "Inverse Semigroups: The Theory of Partial Symmetries",
    publisher = "World Scientific Publishing Co.",
    ?_volume   = "",
    ?_number   = "",
    ?_series   = "",
    ?_address  = "",
    ?_edition  = "",
    year      = "1998",
    ?_month    = "",
    ?_note     = "",
}

@article{lawson,
title = {Pseudogroups and their {\'E}tale groupoids},
journal = {Advances in Mathematics},
volume = {244},
pages = {117-170},
year = {2013},
issn = {0001-8708},
doi = {https://doi.org/10.1016/j.aim.2013.04.022},
url = {https://www.sciencedirect.com/science/article/pii/S0001870813001734},
author = {Mark V. Lawson and Daniel H. Lenz},
keywords = {Inverse semigroups, {\'E}tale topological groupoids, Locales, Topos theory},
abstract = {A pseudogroup is a complete infinitely distributive inverse monoid. Such inverse monoids bear the same relationship to classical pseudogroups of transformations as frames do to topological spaces. The goal of this paper is to develop the theory of pseudogroups motivated by applications to group theory, C∗-algebras and aperiodic tilings. Our starting point is an adjunction between a category of pseudogroups and a category of étale groupoids from which we are able to set up a duality between spatial pseudogroups and sober étale groupoids. As a corollary to this duality, we deduce a non-commutative version of Stone duality involving what we call boolean inverse semigroups and boolean étale groupoids, as well as a generalization of this duality to distributive inverse semigroups. Non-commutative Stone duality has important applications in the theory of C∗-algebras: it is the basis for the construction of Cuntz and Cuntz–Krieger algebras and in the case of the Cuntz algebras it can also be used to construct the Thompson groups. We then define coverages on inverse semigroups and the resulting presentations of pseudogroups. As applications, we show that Paterson’s universal groupoid is an example of a booleanization, and reconcile Exel’s recent work on the theory of tight maps with the work of the second author.}
}

@article {NR14,
    AUTHOR = {Nagy, Gabriel and Reznikoff, Sarah},
     TITLE = {Pseudo-diagonals and uniqueness theorems},
   JOURNAL = {Proc. Amer. Math. Soc.},
  FJOURNAL = {Proceedings of the American Mathematical Society},
    VOLUME = {142},
      YEAR = {2014},
    NUMBER = {1},
     PAGES = {263--275},
      ISSN = {0002-9939,1088-6826},
   MRCLASS = {46L10 (46L30)},
  MRNUMBER = {3119201},
MRREVIEWER = {Stefanos\ Orfanos},
       DOI = {10.1090/S0002-9939-2013-11756-9},
       URL = {https://doi.org/10.1090/S0002-9939-2013-11756-9},
}

@article{lledomartinez,
title = {The uniform \textit{R}oe algebra of an inverse semigroup},
journal = {Journal of Mathematical Analysis and Applications},
volume = {499},
number = {1},
pages = {124996},
year = {2021},
issn = {0022-247X},
doi = {https://doi.org/10.1016/j.jmaa.2021.124996},
url = {https://www.sciencedirect.com/science/article/pii/S0022247X21000755},
author = {Fernando Lledó and Diego Martínez},
keywords = {Inverse semigroup, Schützenberger graph, Uniform Roe algebra, Property A},
abstract = {Given a discrete and countable inverse semigroup S one can study, in analogy to the group case, its geometric aspects. In particular, we can equip S with a natural metric, given by the path metric in the disjoint union of its Schützenberger graphs. This graph, which we denote by ΛS, inherits much of the structure of S. In this article we compare the C*-algebra RS, generated by the left regular representation of S on ℓ2(S) and ℓ∞(S), with the uniform Roe algebra over the metric space, namely Cu⁎(ΛS). This yields a characterization of when RS=Cu⁎(ΛS), which generalizes finite generation of S. We have termed this by admitting a finite labeling (or being FL), since it holds when ΛS can be labeled in a finitary manner. The graph ΛS, and the FL condition, also allow to analyze large scale properties of ΛS and relate them with C*-properties of the uniform Roe algebra. In particular, we show that domain measurability of S (a notion generalizing Day's definition of amenability of a semigroup, cf., [6]) is a quasi-isometric invariant of ΛS. Moreover, we characterize property A of ΛS (or of its components) in terms of the nuclearity and exactness of the corresponding C*-algebras. We also treat the special classes of F-inverse and E-unitary inverse semigroups from this large scale point of view.}
}

@article{ozawa,
author = {Ozawa, Narutaka},
year = {2000},
month = {},
pages = {},
title = {Amenable actions and exactness for discrete groups},
volume = {330},
journal = {Comptes Rendus de l Académie des Sciences - Series I - Mathematics},
doi = {10.1016/S0764-4442(00)00248-2}
}

@book{P,
    author    = "A. L. T. Paterson",
    title     = "Groupoids, inverse semigroups, and their operator algebras",
    publisher = "Springer Science",
    ?_volume   = "170",
    ?_number   = "",
    ?_series   = "",
    ?_address  = "",
    ?_edition  = "",
    year      = "1999",
    ?_month    = "",
    ?_note     = "",
}

@misc{adeg,
      title={Exact groupoids}, 
      author={Claire Anantharaman-Delaroche},
      year={2021},
      eprint={1605.05117},
      archivePrefix={arXiv},
      primaryClass={math.OA},
      url={https://arxiv.org/abs/1605.05117}, 
}

@book{AR,
    author    = "C. Anantharaman and J. Renault",
    title     = "Amenable Groupoids",
    publisher = "L'Enseignement Mathématique",
    ?_volume   = "88",
    ?_number   = "",
    ?_series   = "",
    ?_address  = "",
    ?_edition  = "",
    year      = "2000",
    ?_month    = "",
    ?_note     = "",
}

@misc{bghx,
      title={On \textit{H}ausdorff covers for non-\textit{H}ausdorff groupoids}, 
      author={Kevin Aguyar Brix and Julian Gonzales and Jeremy B. Hume and Xin Li},
      year={2025},
      eprint={2503.23203},
      archivePrefix={arXiv},
      primaryClass={math.OA},
      url={https://arxiv.org/abs/2503.23203}, 
}

@misc{simsNotes,
      title={{\MakeUppercase{\'E}}tale groupoids and their \textup{C*}-algebras}, 
      author={Aidan Sims},
      year={2018},
      eprint={1710.10897},
      archivePrefix={arXiv},
      primaryClass={math.OA},
      url={https://arxiv.org/abs/1710.10897}, 
}

@article {CEPSS,
    AUTHOR = {Orloff Clark, Lisa and Exel, Ruy and Pardo, Enrique and Sims,
              Aidan and Starling, Charles},
     TITLE = {Simplicity of algebras associated to non-{H}ausdorff
              groupoids},
   JOURNAL = {Trans. Amer. Math. Soc.},
  FJOURNAL = {Transactions of the American Mathematical Society},
    VOLUME = {372},
      YEAR = {2019},
    NUMBER = {5},
     PAGES = {3669--3712},
      ISSN = {0002-9947,1088-6850},
   MRCLASS = {16S99 (16S10 22A22 46L05 46L55)},
  MRNUMBER = {3988622},
       DOI = {10.1090/tran/7840},
       URL = {https://doi.org/10.1090/tran/7840},
}
\end{document}